\numberwithin{equation}{section}
\numberwithin{equation}{section}
\newcommand{\be}{\begin{equation}}
\newcommand{\ee}{\end{equation}}
\newcommand{\beaa}{\begin{eqnarray*}}
\newcommand{\eeaa}{\end{eqnarray*}}
\newcommand{\bea}{\begin{eqnarray}}
\newcommand{\eea}{\end{eqnarray}}
\newcommand{\lbl}{\label}
\newcommand{\bei}{\begin{itemize}}
\newcommand{\eei}{\end{itemize}}
\newtheorem{theorem}{ \noindent T{\footnotesize HEOREM}}
\newtheorem{lemma}{ \noindent L{\footnotesize EMMA}}[section]
\newtheorem{remark}{ \noindent R{\footnotesize EMARK}}[section]
\begin{document}

\title{A generalized Hardy-Ramanujan formula for the number of restricted integer partitions}
\author{Tiefeng Jiang$^{1}$
and  Ke Wang$^2$\\
University of Minnesota and Hong Kong University of Science and Technology}

\date{}
\maketitle

\footnotetext[1]{School of Statistics, University of Minnesota, 224 Church
Street, S. E., MN55455, USA, jiang040@umn.edu. 
The research of Tiefeng Jiang is
supported in part by NSF Grant DMS-1209166 and DMS-1406279.}

\footnotetext[2]{Department of Mathematics, Hong Kong University of Science and Technology, Clear Water Bay, Kowloon, Hong Kong, kewang@ust.hk. Ke Wang  is supported by by HKUST Initiation Grant IGN16SC05. 
}

\begin{abstract}
\noindent  We derive an asymptotic formula for $p_n(N,M)$, the number of partitions of integer $n$ with part size at most $N$ and length at most $M$. We consider both $N$ and $M$ are comparable to $\sqrt{n}$. This is an extension of the classical Hardy-Ramanujan formula and Szekeres' formula. The proof relies on the saddle point method.
\end{abstract}





\section{Introduction}

A partition of integer $n$ is a sequence of positive integers $\lambda_1\ge \cdots \ge \lambda_k >0$ satisfying $$\sum_{i=1}^k \lambda_i=n,$$ where $k$ is the length and the $\lambda_i$'s are the parts of the partition. Let $p_n$ be the number of all partitions of $n$. In a celebrated paper \cite{HR1918}, Hardy and Ramanujan proved the asymptotic formula 
\begin{align}\label{eq:HR}
p_n \sim \frac{1}{4\sqrt{3} n} \exp\left( \sqrt{\frac{2n}{3}} \pi \right).
\end{align}
For two positive sequences $\{a_n\}$ and $\{b_n\}$, we use $a_n \sim b_n$ if $\lim_{n\to \infty} a_n/b_n=1$. 

Let $p_n(N)$ be the number of partitions of $n$ with part size at most $N$. Note that $p_n(n)=p_n$. Szekeres \cite{Sze51, Sze53} obtained the asymptotic formulas for $p_n(N)$ as $n$ tends to infinity, using substantially different
approaches for two distinct though slightly overlapping ranges of $N$. In particular, Szekeres' formula holds if $\alpha:=N/\sqrt{n} \ge C>0$. The formula was reproduced later by Canfield \cite{Can97} using recursive equations for $p_n(N)$ and Taylor expansion. It is observed in \cite{Can97} that Szekeres' formula could be combined into a single form 
\begin{align}\label{eq:SZ}
p_n(\alpha \sqrt{n}) = \frac{\rho(\alpha) \exp\left[ \Big(2\rho(\alpha)-\alpha \log(1-e^{-\alpha \rho(\alpha)})\Big)\sqrt{n} \right] }{2^{3/2}\pi n \sqrt{1-\left(\frac{\alpha^2}{2}+1\right)e^{-\alpha \rho(\alpha)}}}\left( 1+ O\big(n^{-1/6+\epsilon}\big)\right)
\end{align} 
as $n\to \infty$, where $\rho(\alpha)>0$ is the unique solution to the implicit equation 
\begin{equation}\label{eq:rho}
\rho(\alpha)^2 = \text{Li}_2(1-e^{-\alpha \rho(\alpha)})
\end{equation}
and $\text{Li}_2$ is  \emph{Spence's function}, or \emph{dilogarithm}, defined for 
complex number $|z|<1$ as
\beaa
\text{Li}_2(z)= \sum_{k=1}^{\infty} \frac{z^k}{k^2}.
\eeaa
Since $\rho(\alpha)$ is an increasing function and satisfies
 \[
 \rho(0)=0 \quad\mbox{and}\quad \lim_{\alpha\to\infty}\rho(\alpha)= \frac{\pi}{\sqrt{6}},
 \] 
 it can be checked that when $\alpha\to \infty$, the right side of \eqref{eq:SZ} converges to the right side of \eqref{eq:HR}. Several years later, Romik \cite{Rom05} provided another proof of \eqref{eq:SZ} using probabilistic methods.

In this paper, we focus on $p_n(N,M)$, the number of partitions of $n$ with part size at most $N$ and length at most $M$. The $p_n(N,M)$ for $n=0,\ldots, NM$ are also called the coefficients of the \emph{$q$-binomial coefficients} or \emph{Gaussian binomial coefficients}. In \cite[Theorem 2.4]{SZ16}, the asymptotic behavior of $p_n(N,M)$ has been investigated assuming $M$ is fixed and $N$ gets arbitrarily large. Very recently, Richmond \cite{Ric18} derived the asymptotic formula for $p_n(N,M)$ when both $N$ and $M$ are close to their expected values $\frac{\sqrt{6n}}{\pi} \log (\frac{\sqrt{6n}}{\pi})$ (see \cite{EL41} for the distributions of the length and largest part of a uniform integer partition). When $n$ is close to $NM/2$, an asymptotic formula for $p_n(N,M)$ was obtained before by Tak\'acs \cite{Tak86} . More precisely, whenever $N\to \infty$, $M\to \infty$, $|N-M|=O(\sqrt{M+N})$ and $|n-NM/2|=O(\sqrt{NM(N+M)})$, it was shown in \cite{Tak86} (see also \cite{AA91}) that
\begin{align}\label{eq:AA}
p_n(N,M) = \binom{N+M}{N} \sqrt{\frac{6}{\pi NM (N+M+1)}} \exp\left(-\frac{6(n-{NM}/{2})^2}{NM (N+M+1)}  \right)  \left(1+o(1) \right).
\end{align}

In this paper, we aim to complement this result by deriving asymptotic formulas for $p_n(N,M)$ when $n$ is around $NM/\tau$ with $\tau \neq 2$. We prove such formulas by imposing some extra requirement on $\tau$ (see Theorem \ref{thm:asymp} and discussion in the end of Section \ref{sec:main}). Our motivation to derive such formulas lies in studying the limiting distribution of a partition chosen uniformly from the set of restricted partitions $\mathcal{P}_n(N)$, that is, partitions of $n$ with part size at most $N$, for the entire range $1\le N \le n$. Currently, the uniform distributions on $\mathcal{P}_n(n)$ (see \cite{EL41, Fri93,Pit97}), $\mathcal{P}_n(N)$ for $N$ fixed integer (see \cite{JW16}) and $\mathcal{P}_n(N)$ for $N=o(n^{1/3})$ (see \cite{JW17}) have been studied. We believe the asymptotic formula of $p_n(N,M)$ plays an important role in understanding the uniform distribution on $\mathcal{P}_n(N)$ for other values of $N$. This will be explored in future research.

\subsection{Main result}\label{sec:main}
For positive integers $n, M, N$, we denote $$\alpha=\frac{N}{\sqrt{n}}\quad\mbox{and}\quad \beta=\frac{M}{\sqrt{n}}.$$
\begin{theorem}\lbl{thm:asymp}
 Let $\varepsilon\in(0, 1/8)$ be given.  Then uniformly for $N\ge 4\sqrt{n}$ and $M\ge 4\sqrt{n}$,
we have, as $n\to\infty$, that
\begin{align*}
p_n(N,M) = \frac{g(\alpha,\beta)^2 }{ 2\pi\sqrt{L(\alpha,\beta)}}  \sqrt{\frac{1-e^{-(\alpha+\beta)g(\alpha,\beta)}}{(1-e^{-\alpha g(\alpha,\beta)})(1-e^{-\beta g(\alpha,\beta)})}}\cdot\frac{e^{\sqrt{n}K(\alpha,\beta)}}{n}\Big(1+ O(n^{-1/4+\epsilon})\Big),
\end{align*}
where  $g(\alpha,\beta)>0$ is the unique solution to the implicit equation 
\begin{align*}
g^2(\alpha,\beta) = \text{Li}_2(1-e^{-\alpha g(\alpha,\beta)}) + \text{Li}_2(1-e^{-\beta g(\alpha,\beta)}) - \text{Li}_2 (1-e^{-(\alpha+\beta)g(\alpha,\beta)}),
\end{align*}
and
\begin{align*}
L(\alpha,\beta) &= 2 g^2(\alpha,\beta) + \frac{(\alpha+\beta)^2 g^2(\alpha,\beta)}{e^{(\alpha+\beta)g(\alpha,\beta)}-1} -\frac{\alpha^2 g^2(\alpha,\beta)}{e^{\alpha g(\alpha,\beta)}-1} -\frac{\beta^2 g^2(\alpha,\beta)}{e^{\alpha g(\alpha,\beta)}-1},\\
K(\alpha,\beta) &= g(\alpha,\beta) + \frac{1}{g(\alpha,\beta)}\Big(\frac{\pi^2}{6} + \text{Li}_2(e^{-(\alpha+\beta)g(\alpha,\beta)}) - \text{Li}_2(e^{-\alpha g(\alpha,\beta)}) -\text{Li}_2 (e^{-\beta g(\alpha,\beta)})\Big).
\end{align*}
\end{theorem}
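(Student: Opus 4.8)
The plan is to start from the exact generating function identity
\[
\sum_{n\ge 0} p_n(N,M)\, q^n \;=\; \binom{N+M}{M}_q \;=\; \prod_{j=1}^{M}\frac{1-q^{N+j}}{1-q^{j}},
\]
so that by Cauchy's integral formula $p_n(N,M) = \frac{1}{2\pi i}\oint \frac{F_{N,M}(q)}{q^{n+1}}\,dq$ over a circle $|q|=r$, $0<r<1$. Writing $q=re^{i\phi}$, the whole problem becomes an asymptotic evaluation of $\int_{-\pi}^{\pi} F_{N,M}(re^{i\phi}) r^{-n} e^{-in\phi}\,d\phi$ by the saddle point (Hardy–Ramanujan circle) method. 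The first step is to choose the radius $r=e^{-t/\sqrt n}$ and locate the real saddle point: differentiating $\log F_{N,M}(e^{-s}) + ns$ in $s$ and setting it to zero gives an equation which, after rescaling $s = t/\sqrt n$ and using $N=\alpha\sqrt n$, $M=\beta\sqrt n$, should converge (via the Euler–Maclaurin / dilogarithm asymptotics below) precisely to the implicit equation $g^2 = \mathrm{Li}_2(1-e^{-\alpha g}) + \mathrm{Li}_2(1-e^{-\beta g}) - \mathrm{Li}_2(1-e^{-(\alpha+\beta)g})$ defining $g(\alpha,\beta)$; one must also check this equation has a unique positive solution and that $g$ stays in a compact set bounded away from $0$ and $\infty$ when $\alpha,\beta\ge 4$.

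The technical heart is the asymptotic expansion of $\log F_{N,M}(e^{-s})$ for small $s>0$. Write
\[
\log F_{N,M}(e^{-s}) = \sum_{j=1}^{M}\Bigl(\log(1-e^{-(N+j)s}) - \log(1-e^{-js})\Bigr)
= -\sum_{k\ge 1}\frac{1}{k}\cdot\frac{e^{-ks}\,(1-e^{-kNs})(1-e^{-kMs})}{1-e^{-ks}}\cdot e^{ks}\cdot\bigl(\text{telescoping}\bigr),
\]
more cleanly expanding each logarithm as $-\sum_{k\ge1}\frac{1}{k}\sum_j (e^{-k(N+j)s}-e^{-kjs})$ and summing the geometric series in $j$. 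This produces a sum over $k$ that is amenable to Euler–Maclaurin (or to the Mellin transform / Perron formula), yielding a leading term of order $1/s$ given by the combination of four dilogarithms $\tfrac{1}{s}\bigl(\tfrac{\pi^2}{6} + \mathrm{Li}_2(e^{-(\alpha+\beta)g}) - \mathrm{Li}_2(e^{-\alpha g}) - \mathrm{Li}_2(e^{-\beta g})\bigr)$ after substituting $s=g/\sqrt n$, a term of order $1$ (the logarithmic correction coming from the $1/2$ Euler–Maclaurin endpoint term, which produces the square-root factor $\sqrt{(1-e^{-(\alpha+\beta)g})/((1-e^{-\alpha g})(1-e^{-\beta g}))}\,$), and an error term. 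Combined with $ns = g\sqrt n$, the exponent becomes $\sqrt n\,K(\alpha,\beta)$ as claimed; the second derivative in $s$ at the saddle produces $L(\alpha,\beta)$ and the Gaussian factor $\sqrt{2\pi/(\,\text{second derivative}\,)}$, accounting for the $1/(2\pi\sqrt{L})$ and an extra $n^{-1/2}$, while $g^2/n$ comes from combining the Jacobian $r^{-n}=e^{g\sqrt n}$ normalization with the $1/n$ overall — more precisely from $\frac{1}{2\pi}\cdot\frac{1}{\text{(scale)}}$ bookkeeping; the factor $g(\alpha,\beta)^2/n$ I expect to emerge from the product of the $e^{ns}$ piece's derivative structure and the change of variables $\phi\mapsto \phi\sqrt n/g$.

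Next I would carry out the minor-arc estimate: show that the contribution of $\phi$ with $|\phi|\ge \delta/\sqrt n$ (for a suitable small $\delta$, or with a $\phi$-dependent cutoff like $n^{-1/2+\varepsilon}$) is exponentially smaller than the main term. Since $F_{N,M}(re^{i\phi})$ is a ratio, this is less automatic than in the classical $p_n$ case; the standard device is to bound $\bigl|\prod_{j=1}^M \frac{1-q^{N+j}}{1-q^j}\bigr|$ by pairing factors and using $|1-q^m|^2 = 1 - 2r^m\cos(m\phi) + r^{2m}$, showing $\mathrm{Re}\,\bigl(\log F_{N,M}(re^{i\phi}) - \log F_{N,M}(r)\bigr)$ is sufficiently negative; for $\alpha,\beta\ge 4$ the factors of the $q$-binomial do not come close to cancelling so this should be workable, but making the bound uniform in $N,M\ge 4\sqrt n$ is where I expect the main obstacle to lie. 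Finally, on the major arc $|\phi|\le n^{-1/2+\varepsilon}$, I would Taylor-expand $\log F_{N,M}(re^{i\phi})$ to second order in $\phi$ around $\phi=0$, control the cubic remainder (this is where the $O(n^{-1/4+\varepsilon})$ error ultimately comes from, by balancing the cutoff exponent $\varepsilon$ against the size of the third-derivative term), perform the resulting Gaussian integral, and assemble the pieces. Throughout, the choice $\varepsilon\in(0,1/8)$ and the resulting exponent $-1/4+\epsilon$ should be tracked carefully: it is dictated by trading off the minor-arc cutoff, the Euler–Maclaurin error in the saddle expansion, and the cubic term on the major arc.
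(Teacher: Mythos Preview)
Your overall strategy matches the paper's: Cauchy's formula on the circle $|q|=e^{-v}$ with $v=g/\sqrt n$, an asymptotic expansion of $\log F_{N,M}(e^{-z})$ in terms of dilogarithms, the saddle-point equation for $g$, a Gaussian on the major arc, and a minor-arc bound. Two points, however, would cause your argument to fail as written. First, your major-arc cutoff $|\phi|\le n^{-1/2+\varepsilon}$ is too wide. The second derivative of the exponent at the saddle is of order $n^{3/2}$, so the Gaussian scale is $n^{-3/4}$; the cubic remainder in the Taylor expansion is $O(w_0^3/v^4)$, and with $v\sim n^{-1/2}$ this forces $w_0\sim n^{-3/4+\epsilon/3}$ to get the claimed $O(n^{-1/4+\epsilon})$. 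With your $w_0$ the cubic term is of order $n^{1/2+3\varepsilon}$ and the expansion collapses.

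Second, and more substantively, the minor arc is where the hypothesis $\alpha,\beta\ge 4$ is actually used, and your sketch does not reach it. The dilogarithm expansion of $\log F_{N,M}(e^{-z})$ (the paper's Lemma~2.1; your Euler--Maclaurin step) is only valid while $z$ stays in a fixed sector $\mathrm{Re}(z)/|z|\ge\lambda>0$, so it is unavailable once $|w|\gg v$. The paper therefore splits the minor arc into a near zone $w_0\le|w|\le Cv$ and a far zone $Cv\le|w|\le\pi$. On the far zone a crude bound on the product (in the spirit of your $|1-q^m|^2$ remark) suffices. On the near zone one still has the dilogarithm expansion and must prove the pointwise inequality
\[
\mathrm{Re}\!\left(\frac{1}{z}\Phi(z)-\frac{1}{v}\Phi(v)\right)\le -\,T\,\frac{w^2}{v^3},\qquad \Phi(z)=\tfrac{\pi^2}{6}+\mathrm{Li}_2(e^{-(N+M)z})-\mathrm{Li}_2(e^{-Nz})-\mathrm{Li}_2(e^{-Mz}),
\]
for some uniform $T>0$; this is the paper's Lemma~2.4, and its proof is precisely where the numerical condition $\min(\alpha,\beta)\ge 4$ (equivalently $\min(c_1,c_2)\ge 13/5$) enters. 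Your single-stage $|1-q^m|$ bound does not deliver this near-zone control, and without it the argument cannot close uniformly in $\alpha,\beta\in[4,\infty]$.
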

A few remarks regarding Theorem \ref{thm:asymp} are in order.
\begin{remark}
The uniqueness and existence of $g(\alpha,\beta)>0$ is guaranteed by Lemma \ref{lem:changeofvariables}.  
.
\end{remark}

\begin{remark}\label{rem:uniform}
All of $g(\alpha,\beta)$, $L(\alpha,\beta)$, $K(\alpha,\beta)$ and  $ \sqrt{\frac{1-e^{-(\alpha+\beta)g(\alpha,\beta)}}{(1-e^{-\alpha g(\alpha,\beta)})(1-e^{-\beta g(\alpha,\beta)})}}$ in $p_n(N,M)$ in Theorem \ref{thm:asymp} are bounded from above and from below by two universal positive constants  for all  $\alpha,\beta\in[4,\infty]$. The error term $O(n^{-1/4+\epsilon})$ is also uniform in $\alpha,\beta\in[4,\infty]$. 
\end{remark}

\begin{remark}\label{rem:asymptotic}
If we do not put restrictions on either the part size or the length of the partition, then either $\alpha$ or $\beta$ is equal to infinity in our setting, respectively. In this case, the asymptotic formula discovered in Theorem \ref{thm:asymp} is identical to the Szekeres' formula \eqref{eq:SZ}. 

Let us assume $\beta=\infty$ (by \eqref{eq:P1} the same will apply to the case $\alpha=\infty$).  We first notice that $g(\alpha,\infty)=\rho(\alpha)$, where $\rho(\alpha)$ is the one in \eqref{eq:rho}, and secondly,
\begin{align*}
K(\alpha,\infty) &=\rho(\alpha)+ \frac{1}{\rho(\alpha)} \left(\frac{\pi^2}{6}-\text{Li}_2(e^{-\alpha \rho(\alpha)}) \right)\\
&=\rho(\alpha) + \frac{1}{\rho(\alpha)} \big[ \text{Li}_2(1-e^{-\alpha \rho(\alpha)}) - \alpha \rho(\alpha) \log (1-e^{-\alpha \rho(\alpha)} ) \big]\\
&=2 \rho(\alpha) -\alpha  \log (1-e^{-\alpha \rho(\alpha)} ),
\end{align*}
where we use \eqref{eq:otherexp} in the second identity. Also, it is easy to check that 
$$L(\alpha, \infty)= 2 \rho^2(\alpha) \left[ 1- \frac{\alpha^2}{ 2 (e^{\alpha \rho(\alpha)}-1)} \right].$$ 
Therefore, by plugging these into $p_n(N,M)$ in Theorem \ref{thm:asymp}, we will have \eqref{eq:SZ}.
\end{remark}

We derive the asymptotic formula for $p_n(\alpha \sqrt{n},\beta \sqrt{n})$ in Theorem \ref{thm:asymp} assuming $\alpha$ and $\beta$ are greater than 4. Our proof uses the saddle point method; see \cite[Section 12]{Odl95} or \cite[Chapter 8]{FS09} for a detailed introduction to this method. For the case when $\alpha, \beta$ are both small, we could combine Theorem \ref{thm:asymp}  and  $p_n(N,M)=p_{MN-n}(M,N)$ (see \eqref{eq:P2}) to derive the formula, since both $M/\sqrt{MN-n}$ and $N/\sqrt{MN-n}$ will be large. For instance, this would apply if $\max(\alpha,\beta)<\sqrt{16/15}$.

Finally, we conjecture Theorem \ref{thm:asymp} holds for arbitrary $\alpha,\beta>0$ as long as $\alpha\beta=NM/n>2$. For the remaining case $1<\alpha\beta<2$, we can apply the formula for $p_{MN-n}(M,N)=p_n(N,M)$ since $NM/(NM-n) = \alpha\beta/(\alpha\beta-1) >2$. Therefore, combining \eqref{eq:AA} with Theorem \ref{thm:asymp}, the asymptotic formula for $p_n(N,M)$ is clear for arbitrary $N/\sqrt{n} \in (0,+\infty]$ and $M/\sqrt{n} \in (0,+\infty]$. The stronger assumption $\min(\alpha,\beta)\ge 4$ in Theorem  \ref{thm:asymp} appears to be only a technical condition (see Lemma \ref{lem:technical}) and is used to control the $O(n^{-1/4+\epsilon})$ error term in the formula. It could be pushed to, say, $\min(\alpha,\beta)\ge 2.5$ with advanced help of Maple. We did not try to optimize the lower bound of $\min(\alpha,\beta)$.

The paper is organized as follows. In Section \ref{sec:preliminary}, we will introduce some background, and set up our calculation of obtaining the asymptotic formula in Theorem \ref{thm:asymp}. Some analytic lemmas, which will be used later, are also included. In Section \ref{sec:estimate}, we will carry out the detailed calculations. We will first derive the main term, and estimate the error term afterwards.

We note that shortly after a preprint version of this paper appeared on the arXiv, another preprint \cite{MPP} appeared on the arXiv which derives an asymptotic formula for $p_{n}(N,M)$ using a probabilistic approach. In our notations, they obtained an asymptotic formula for $p_{n}(N,M)$ when $\alpha\beta\ge 2$ and assuming $(\alpha, \beta)$ is in an arbitrarily fixed compact set $K\subset\{(x,y)\in\mathbb{R}^2\ |\ x>0,y>0\}$, with an error bound $o(1)$ depending on the compact set $K$. 


\medskip
\noindent{\bf Notations:} We use standard asymptotic notations $o, O$ as $n$ tends to infinity. We denote $a\lesssim b$ if there is a universal positive constant $C$ such that $a\le Cb$.

\medskip
\noindent{\bf Acknowledgement:} We would like to thank the referee for many helpful suggestions to improve the exposition of the paper.

\section{Preliminaries}\label{sec:preliminary}

In this section, we will first recall the generating function of $p_n(N,M)$ and Spence's functions, and show some of their properties that will be used. We will see a connection between the generating function of $p_n(N,M)$ and Spence's functions in Lemma \ref{lem:asymp}. And then, we express $p_n(N,M)$ as an integral using Cauchy's integral formula, and it is the integral that we are going to estimate. In the end of this section, we prepare some lemmas for the detailed proof of Theorem \ref{thm:asymp} in Section \ref{sec:estimate}. The usage of each lemma is explained before each of their statements.

\subsection{Background materials}

We begin with some basic properties of $p_n(N,M)$; see \cite[Chapter 3]{And76} for a comprehensive introduction. Note that $p_n(N,M) = 0$ if $n>M\times N$ and $p_n(N,M)=1$ if $n=M\times N$. Since $p_n(N,M)-p_n(N,M-1)$ counts the number of partitions of $n$ with length exactly $M$ and part size
at most $N$, that is $1\le \lambda_i \le N$ for $1\le i \le M$ and $\sum_{i=1}^M \lambda_i = n$, by considering $\tilde \lambda_i = \lambda_i-1$, it follows easily that
\bea\lbl{eq:rec}
p_n(N,M)-p_n(N,M-1) = p_{n-M}(N-1,M).
\eea

Denote $G(N,M;q)$ the generating function of $p_n(N,M)$ for $n \ge 0$. Thus
\beaa
G(N,M;q)=\sum_{n\ge 0}p_n(N,M) q^n=\sum_{n= 0}^{MN}p_n(N,M) q^n.
\eeaa

The generating function $G(N,M;q)$ has an explicit expression and is usually referred to as the \emph{Gaussian polynomial}. The following lemma can be found at \cite[Theorem 3.1]{And76}. We include the proof for the readers' convenience. 
\begin{lemma}\lbl{lem:generate}For $M,N\ge 0$,
\bea\lbl{eq:generate}
G(N,M;q)=\frac{\prod_{j=1}^{N+M}(1-q^j)}{\prod_{j=1}^N(1-q^j) \prod_{j=1}^M (1-q^j)} = \frac{\prod_{j=M+1}^{N+M}(1-q^j)}{\prod_{j=1}^N(1-q^j)}.
\eea
\end{lemma}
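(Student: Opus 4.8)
The plan is to prove the Gaussian polynomial identity by a straightforward combinatorial/generating-function induction on $M$ (or $N$), using the recurrence \eqref{eq:rec}. First I would dispose of the trivial base case: when $M=0$, every partition counted by $p_n(N,0)$ is the empty partition (only $n=0$ is allowed), so $G(N,0;q)=1$, which matches the right-hand side of \eqref{eq:generate} since the numerator and denominator both reduce to $\prod_{j=1}^{N}(1-q^j)$. Similarly $G(0,M;q)=1$. The second equality in \eqref{eq:generate} is purely formal: cancel the common factor $\prod_{j=1}^{M}(1-q^j)$ from numerator and denominator, so it suffices to establish the first equality.

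For the inductive step, fix $N$ and assume \eqref{eq:generate} holds for $M-1$. Summing the recurrence \eqref{eq:rec} against $q^n$ over $n\ge 0$ gives
\begin{align*}
G(N,M;q)-G(N,M-1;q) = \sum_{n\ge 0} p_{n-M}(N-1,M)\, q^n = q^M\, G(N-1,M;q).
\end{align*}
This is a recurrence that decreases $M$ on one side and decreases $N$ on the other, so a clean single induction needs a little care; the cleanest route is to set $G^*(N,M;q)$ equal to the claimed right-hand side $\frac{\prod_{j=1}^{N+M}(1-q^j)}{\prod_{j=1}^{N}(1-q^j)\prod_{j=1}^{M}(1-q^j)}$ and verify that $G^*$ satisfies the same relation $G^*(N,M;q)-G^*(N,M-1;q)=q^M G^*(N-1,M;q)$ together with the same boundary values $G^*(N,0;q)=G^*(0,M;q)=1$. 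The algebraic identity to check is
\begin{align*}
\frac{\prod_{j=M+1}^{N+M}(1-q^j)}{\prod_{j=1}^{N}(1-q^j)} - \frac{\prod_{j=M}^{N+M-1}(1-q^j)}{\prod_{j=1}^{N}(1-q^j)} = q^M\cdot\frac{\prod_{j=M+1}^{N+M-1}(1-q^j)}{\prod_{j=1}^{N-1}(1-q^j)},
\end{align*}
which after pulling out the common factor $\prod_{j=M+1}^{N+M-1}(1-q^j)\big/\prod_{j=1}^{N}(1-q^j)$ reduces to the elementary identity $(1-q^{N+M})-(1-q^{M})=q^{M}(1-q^{N})$. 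Since $p_n(N,M)$ and the coefficients of $G^*$ satisfy the same recurrence and the same initial conditions, they coincide, establishing \eqref{eq:generate}. Alternatively — and perhaps more transparently — one can induct on $N+M$ using the companion recurrence $p_n(N,M)-p_n(N-1,M)=p_{n-N}(N,M-1)$ (obtained by conjugating partitions, i.e. the symmetry $p_n(N,M)=p_n(M,N)$), which matches the second form of the right-hand side.

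I expect no real obstacle here; the only thing requiring a moment's attention is bookkeeping the two-variable induction so that the base cases and the recurrence together pin down $G(N,M;q)$ uniquely — one must make sure the recurrence is being applied in a direction along which the induction actually terminates (e.g. induct on $M$ with the $N$-decreasing term handled by a nested induction, or induct on $M+N$). Everything else is the one-line polynomial identity $(1-q^{N+M})-(1-q^M)=q^M(1-q^N)$ and cancellation of telescoping products.
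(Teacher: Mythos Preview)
Your proposal is correct and follows essentially the same route as the paper: derive the generating-function recurrence $G(N,M;q)=G(N,M-1;q)+q^M G(N-1,M;q)$ from \eqref{eq:rec}, check that the candidate Gaussian polynomial satisfies the identical recurrence (via the identity $(1-q^{N+M})-(1-q^M)=q^M(1-q^N)$) and the same boundary values $G(N,0;q)=G(0,M;q)=1$, and conclude by uniqueness. Your remark about carefully organizing the two-variable induction (e.g.\ on $N+M$) is a fair point that the paper leaves implicit.
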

\begin{proof}
It follows from \eqref{eq:rec} that
\beaa
G(N,M;q)=G(N,M-1;q)+q^M\cdot G(N-1,M;q).
\eeaa
Denote the RHS of \eqref{eq:generate} by $\widetilde G(N,M;q)$. By direct calculation, we see that
\beaa
\widetilde G(N,M;q)&=&\frac{\prod_{j=1}^{N+M-1}(1-q^j)}{\prod_{j=1}^N(1-q^j) \prod_{j=1}^{M-1} (1-q^j)}\cdot\frac{1-q^{N+M}}{1-q^M}\\
&=&\widetilde G(N,M-1;q)\cdot \frac{1-q^{M}+q^M(1-q^N)}{1-q^M}\\
&=&\widetilde G(N,M-1;q) + q^M\cdot \widetilde G(N-1,M;q).
\eeaa
Besides, it is easy to check that $G(N,M;q)$ and $\widetilde G(N,M;q)$ satisfy the same initial conditions
\beaa
&&G(N,0;q)=G(0,M;q)=1;\\
&&\widetilde G(N,0;q)=\widetilde G(0,M;q)=1.
\eeaa
Therefore, $G(N,M;q)=\widetilde G(N,M;q)$.
\end{proof}

The following properties of $p_n(N,M)$ can be verified easily from the Ferrers diagram of partitions. We also refer to \cite[Theorem 3.10]{And76} for a proof. 
\begin{lemma}
\begin{align}
&p_n(N,M)=p_n(M,N);\label{eq:P1}\\
&p_n(N,M)=p_{MN-n}(M,N)\label{eq:P2}.
\end{align}
\end{lemma}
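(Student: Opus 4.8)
The plan is to prove both identities bijectively via Ferrers diagrams, as suggested by the remark preceding the statement; I will also point out a shorter algebraic route using the explicit generating function of Lemma \ref{lem:generate}.

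For \eqref{eq:P1}, I would identify a partition $\lambda=(\lambda_1\ge\cdots\ge\lambda_k>0)$ counted by $p_n(N,M)$ with a Ferrers diagram that fits inside an $M\times N$ rectangle: it has $k\le M$ rows and each row has length $\lambda_i\le N$. Conjugation — transposing the diagram, i.e. setting $\lambda_j'=\#\{i:\lambda_i\ge j\}$ — is an involution on partitions of $n$ that interchanges the number of parts with the largest part. Hence it carries diagrams fitting in the $M\times N$ box bijectively onto diagrams fitting in the $N\times M$ box, which gives $p_n(N,M)=p_n(M,N)$.

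For \eqref{eq:P2}, I would use complementation in the box. Given $\lambda$ counted by $p_n(N,M)$, draw its diagram in the top-left corner of the $M\times N$ rectangle and let $S$ be the set of the $MN-n$ cells of the rectangle not occupied by $\lambda$. Rotating $S$ by $180^\circ$ — the cell in row $i$, column $j$ of the rectangle being sent to row $M+1-i$, column $N+1-j$ — produces a left-justified, weakly row-decreasing array, i.e. the Ferrers diagram of a partition $\mu$ with $|\mu|=MN-n$ that again fits in the $M\times N$ box. This map is an involution, so it is a bijection and yields $p_n(N,M)=p_{MN-n}(N,M)$; combining with \eqref{eq:P1} gives $p_n(N,M)=p_{MN-n}(M,N)$. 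Alternatively, \eqref{eq:P1} is immediate from the symmetry of the first expression for $G(N,M;q)$ in Lemma \ref{lem:generate}, and \eqref{eq:P2} follows from the palindromicity of the Gaussian polynomial: using $1-q^{-j}=-q^{-j}(1-q^j)$ in $G(N,M;q)=\prod_{j=M+1}^{N+M}(1-q^j)\big/\prod_{j=1}^{N}(1-q^j)$ one computes $q^{MN}G(N,M;q^{-1})=G(N,M;q)$, which is exactly $p_{MN-n}(N,M)=p_n(N,M)$.

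There is no substantial obstacle here. The only point that needs a line of care in the bijective argument is checking that the $180^\circ$ rotation of the complement is genuinely a partition shape and that it still respects the box constraints; this is a routine verification once one tracks what the rotation does to row and column indices. In the algebraic approach, the corresponding care is just bookkeeping the exponents of $q$ (noting $\sum_{j=M+1}^{N+M}j-\sum_{j=1}^{N}j=MN$).
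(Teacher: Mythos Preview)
Your proposal is correct and follows exactly the route the paper indicates: the paper does not write out a proof but simply states that both identities ``can be verified easily from the Ferrers diagram of partitions'' (with a reference to \cite[Theorem~3.10]{And76}), and your conjugation and box-complementation bijections are precisely those verifications. The additional algebraic argument via the symmetry and palindromicity of $G(N,M;q)$ is a nice bonus that the paper does not mention.
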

It was first proved by Sylvester \cite{Syl1878} that $p_n(N,M)$ is unimodal and 
\begin{equation}\label{eq:pmax}
\max_{1\le n \le NM}p_n(N,M) = p_{[\frac{1}{2}NM]}(N,M).
\end{equation}

Now let us recall  Spence's function, or dilogarithm, which is defined for 
complex numbers $|z|<1$ as
\beaa
\text{Li}_2(z)= \sum_{k=1}^{\infty} \frac{z^k}{k^2}.
\eeaa
It has an analytic continuation for $z\in\mathbb{C}\setminus [1,\infty)$ given by
\beaa
\text{Li}_2(z)=-\int_{0}^z\frac{\log (1-u)}{u}\,du,\ \mbox{or equivalently}\quad\text{Li}_2(1-z)=\int_{1}^z\frac{\log u}{1-u}\,du,
\eeaa
 where $\log$ is the principal branch of the logarithm function. $\text{Li}_2(z)$ can be continuously extended to $z=1$, and $\text{Li}_2(1)= \sum_{k=1}^{\infty} \frac{1}{k^2}=\pi^2/6$. Note that by a change of variable $s=-\log(t)$, we have
\beaa
\text{Li}_2(1-v)=\int_{1}^v\frac{\log t}{1-t}\,dt =\int_{0}^{\log(1/v)}\frac{s}{e^s-1}\,ds.
\eeaa
Equivalently, we have for $x>0$,
\bea\lbl{eq:otherexp}
\int_{0}^{x}\frac{t}{e^t-1}\,dt = \text{Li}_2(1-e^{-x})=\frac{\pi^2}{6} + x\cdot\log(1-e^{-x}) - \text{Li}_2(e^{-x}),
\eea
where the last identity follows from the property that
\begin{align*}
\text{Li}_2(z)+\text{Li}_2(1-z) &= \lim_{\delta\to 0^+}\left(-\int_{0}^{1-\delta}\frac{\log (1-u)}{u}\,du-\int_{1-\delta}^z\frac{\log (1-u)}{u}\,du+\int_{1-\delta}^z\frac{\log u}{1-u}\,du\right)\\
&=\frac{\pi^2}{6}-\lim_{\delta\to 0^+}\int_{1-\delta}^z d\, \Big(\log (1-u)\log u\Big)\\
&=\frac{\pi^2}{6} -\log(z)\cdot\log(1-z).
\end{align*}

\subsection{Setting up of the calculation}

By Cauchy's integral formula,
$$p_n(N,M)=\frac{1}{2\pi i} \int_{|z|=r} \frac{G(N,M;z)}{z^{n+1}}\,dz$$
for any $0<r<1$. We substitute $z=e^{-(v+iw)}$ and for convenience, denote
\bea\lbl{eq:function}
f(v+iw) = f_{N,M}(v+iw)=G(N,M;e^{-(v+iw)}) = \frac{\prod_{j=M+1}^{N+M}(1-e^{-j(v+iw)})}{\prod_{j=1}^N(1-e^{-j(v+iw)})}.
\eea
Therefore,
\begin{equation}\label{eq:cauchyintegral}
p_n(N,M)=\frac{1}{2\pi}\int_{-\pi}^{\pi} f(v+iw) e^{n(v+iw)}\,dw=\frac{1}{2\pi}\int_{-\pi}^{\pi} e^{\log f(v+iw)} e^{n(v+iw)}\,dw
\end{equation}
for every $v>0$. In the end, for our purpose, we will choose $v=c \cdot n^{1/2}$ for some constant $c>0$.  In Section \ref{sec:estimate}, we will show that the main term in the above integral is 
\begin{equation}\label{eq:firstorderaux}
\frac{1}{2\pi}\int_{-w_0}^{w_0} e^{\log f(v+iw)} e^{n(v+iw)}\,dw
\end{equation}
with $w_0=n^{-3/4+\epsilon/3}$, and we will prove that what is left in the integral is a lower order term. 

\subsection{Supporting lemmas}
To estimate \eqref{eq:firstorderaux}, we shall first analyze the function $\log f(z)$. 

\begin{lemma}\lbl{lem:asymp}
 Assume $z=v+iw$ with $v>0$, $|z|<1$ and assume $z$ stays within some angle in the right half plane, that is, $v/|z|=\text{Re}(z)/|z| \ge \lambda$ for some $\lambda>0$. Recall $f(z)$ in \eqref{eq:function}. Then for $ |z|\ge \frac{\bar c}{\min(M,N)}$ with some $\bar c>0$, we have 
 \beaa
 \log f(z) &=& \frac{1}{z} \left(\frac{\pi^2}{6} + \text{Li}_2(e^{-z(N+M)}) - \text{Li}_2(e^{-zN})-\text{Li}_2(e^{-zM})\right)\\
 && -\frac{1}{2}\log\left(\frac{1}{z}\right)-\frac{1}{2}\log \frac{(1-e^{-zN})(1-e^{-zM})}{1-e^{-z(N+M)}}-\log\sqrt{2\pi} + O(|z|),
 \eeaa
 where $\log$ is the principal branch of the logarithm function.
\end{lemma}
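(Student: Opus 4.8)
The plan is to obtain the asymptotic expansion of $\log f(z) = \sum_{j=M+1}^{N+M}\log(1-e^{-jz}) - \sum_{j=1}^{N}\log(1-e^{-jz})$ by applying the Euler--Maclaurin summation formula to each of the two sums. The key observation is that for $\operatorname{Re}(z)>0$ the function $u\mapsto \log(1-e^{-uz})$ is smooth on $[1,\infty)$ (and on the relevant finite intervals), so Euler--Maclaurin gives, for a sum $\sum_{j=a}^{b}\phi(jz)$ with $\phi(u)=\log(1-e^{-u})$, the leading integral term $\frac{1}{z}\int_{az}^{bz}\phi(u)\,du$, plus a boundary term $\frac12(\phi(az)+\phi(bz))$, plus further correction terms involving $\phi'$ at the endpoints, plus a remainder. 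I would write
$$\log f(z) = \frac{1}{z}\Big(\int_{(M+1)z}^{(N+M)z}\log(1-e^{-u})\,du - \int_{z}^{Nz}\log(1-e^{-u})\,du\Big) + (\text{boundary terms}) + (\text{remainder}),$$
being careful that the contour of integration is the straight segment in the right half-plane, which is legitimate since $\log(1-e^{-u})$ is holomorphic there.

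The next step is to identify the integral term with the dilogarithm expression in the statement. Using the primitive $\int \log(1-e^{-u})\,du$ and the identity $\operatorname{Li}_2(e^{-x}) = \frac{\pi^2}{6} - \int_0^x \frac{t}{e^t-1}\,dt + x\log(1-e^{-x})$ (equivalently \eqref{eq:otherexp}), one checks that the bracketed integral equals $\frac{\pi^2}{6} + \operatorname{Li}_2(e^{-z(N+M)}) - \operatorname{Li}_2(e^{-zN}) - \operatorname{Li}_2(e^{-zM})$ up to terms that, after division by $z$, combine with the boundary contributions. Indeed the limits near $u=z$ and $u=(M+1)z$ produce, after Taylor expansion in $z$ (using $\log(1-e^{-u}) = \log u + O(u)$ as $u\to 0$ along the angle), the terms $-\frac12\log(1/z) - \log\sqrt{2\pi}$: this is exactly the place where a Stirling-type constant appears, coming from $\int_0^\delta(\log(1-e^{-u})-\log u)\,du$ together with the $\zeta$-regularized value $\log\sqrt{2\pi}$, or more transparently from applying Euler--Maclaurin directly to $\sum_{j=1}^{N}\log(1-e^{-jz})$ and recognizing the small-argument part as a perturbation of $\log N! \sim N\log N - N + \frac12\log(2\pi N)$. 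The half-integer endpoint terms $\frac12\log(1-e^{-zN})$, $\frac12\log(1-e^{-zM})$, $-\frac12\log(1-e^{-z(N+M)})$ are precisely the $\frac12(\phi(az)+\phi(bz))$ boundary contributions at the large endpoints $u = Nz, Mz, (N+M)z$, giving the logarithmic term $-\frac12\log\frac{(1-e^{-zN})(1-e^{-zM})}{1-e^{-z(N+M)}}$.

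The main obstacle, and the step requiring the most care, is controlling the Euler--Maclaurin remainder uniformly and showing it is $O(|z|)$. The remainder involves integrals of $\phi'(jz)$ and $\phi''(jz)$ over $j$; since $\phi'(u) = \frac{1}{e^u-1}$ and $\phi''(u)$ decay exponentially as $\operatorname{Re}(u)\to\infty$ but blow up like $1/u$, resp. $1/u^2$, as $u\to 0$, one must split the range according to whether $|jz|$ is small or large. In the large regime exponential decay makes everything negligible; in the small regime $|jz| \lesssim 1$ there are only $O(1/|z|)$ such indices $j$ (using $\operatorname{Re}(z)/|z|\ge\lambda$), and the correction terms of order $\phi'(jz) \asymp 1/(jz)$ summed and divided appropriately contribute $O(|z|\cdot \log(1/|z|))$ or better — here the hypothesis $|z|\ge \bar c/\min(M,N)$ is used to guarantee the large endpoint $\min(M,N)z$ is bounded below, so that $\phi, \phi'$ there are genuinely $O(1)$ rather than singular, and the boundary terms at the large end are of the stated form rather than hidden inside the error. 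I would carry this out by combining the two sums first (so that $j$ from $1$ to $M$ cancels against part of $M+1$ to $M+N$, leaving the cleaner difference $\sum_{j=1}^N[\phi((j+M)z) - \phi(jz)]$), which tightens the estimates and isolates the $O(|z|)$ remainder; the angle condition $v/|z|\ge\lambda$ is what keeps all these expansions and the branch of $\log$ under control.
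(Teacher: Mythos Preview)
Your Euler--Maclaurin-in-$j$ strategy is a genuine alternative to what the paper does, but the paper proceeds quite differently: it expands each $\log(1-e^{-jz})=-\sum_{k\ge 1}e^{-jkz}/k$, sums the geometric series in $j$ first to obtain $\log f(z)=z\sum_{k\ge1}\frac{1}{kz}\frac{(1-e^{-kzN})(1-e^{-kzM})}{e^{kz}-1}$, then splits this $k$-sum into a closed-form piece $z\sum_k(1-e^{-kzN})(1-e^{-kzM})\big(\tfrac{1}{k^2z^2}-\tfrac{e^{-kz}}{2kz}\big)$ (which yields the $\text{Li}_2$ and $\log$ terms directly) plus a Riemann sum $\sum_k z\psi(kz)$ with $\psi$ analytic at $0$, and replaces the latter by $\int_0^\infty\psi(x)\,dx$ with error $O(|z|)$. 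The constant $-\log\sqrt{2\pi}$ enters as the value of that integral (via an evaluation in \cite{Pos88}), not via Stirling. So the paper performs the sum-to-integral comparison in the \emph{Taylor index $k$}, while you propose it in the \emph{lattice index $j$}; the paper's route makes the dilogarithms fall out automatically and reduces the remainder to a single bounded analytic $\psi$, whereas yours is conceptually more direct but must confront the singularity of $u\mapsto\log(1-e^{-u})$ at $u=0$.

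That singularity is where your sketch has a real gap. Applying Euler--Maclaurin to $\sum_{j=1}^{N}\log(1-e^{-jz})$ on $[1,N]$, the boundary derivatives at $j=1$ satisfy $g'(1)=z/(e^z-1)=1+O(|z|)$, $g''(1)=-1+O(|z|)$, $g'''(1)=2+O(|z|)$, and so on---each is $O(1)$, not $O(|z|)$---so the expansion does not truncate with an $O(|z|)$ remainder at any finite order, and $-\log\sqrt{2\pi}$ is buried inside the full Bernoulli series. Your own hedge ``$O(|z|\log(1/|z|))$ or better'' already falls short of the $O(|z|)$ the lemma claims. (A smaller slip: $(M+1)z$ is \emph{not} a small endpoint, since $|z|\ge\bar c/\min(M,N)$ forces $|(M+1)z|\ge\bar c$; only $j=1$ is delicate.) The standard repair is to subtract the singular part first: write $\log(1-e^{-jz})=\log(jz)+h(jz)$ with $h(u)=\log\frac{1-e^{-u}}{u}$ analytic at $0$. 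Then $\sum_{j=1}^N\log(jz)=\log N!+N\log z$ is handled exactly by Stirling (which is where $-\log\sqrt{2\pi}$ enters cleanly), and Euler--Maclaurin on $\sum_{j=1}^N h(jz)$ has boundary terms $z^{k}h^{(k)}(z)=O(|z|^{k})$ at $j=1$ and a second-order remainder $\int_1^N |z|^2|h''(tz)|\,dt=O(|z|)$. With this modification your route goes through; as written, the remainder control is the missing idea.
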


\begin{proof}
Since
\beaa
\log f(z) = \sum_{j=M+1}^{M+N} \log(1-e^{-jz}) - \sum_{j=1}^N  \log(1-e^{-jz}),
\eeaa
by Taylor expansion,
\beaa
\log f(z) &=& -\sum_{j=M+1}^{M+N}\sum_{k=1}^{\infty} \frac{e^{-jkz}}{k} + \sum_{j=1}^N \sum_{k=1}^{\infty} \frac{e^{-jkz}}{k}\\
&=&\sum_{k=1}^{\infty} \frac{1}{k}\left(\sum_{j=1}^N e^{-kz\cdot j} - \sum_{j=M+1}^{M+N} e^{-kz\cdot j}\right)\\
&=& \sum_{k=1}^{\infty} \frac{1}{k} \frac{e^{-kz}(1-e^{-kz\cdot N})(1-e^{-kz\cdot M})}{1-e^{-kz}}\\
&=& z\cdot \sum_{k=1}^{\infty} \frac{1}{kz} \frac{(1-e^{-kz\cdot N})(1-e^{-kz\cdot M})}{e^{kz}-1}.
\eeaa
Now we compare $\log f(z)$ with another summation. 
\beaa
&& \log f(z) - z\cdot \sum_{k=1}^{\infty} (1-e^{-kz\cdot N})(1-e^{-kz\cdot M})\left( \frac{1}{k^2z^2} - \frac{e^{-kz}}{2kz} \right)\\
&=&z\cdot \sum_{k=1}^{\infty} (1-e^{-kz\cdot N})(1-e^{-kz\cdot M})\left(\frac{1}{kz(e^{kz}-1)} - \frac{1}{k^2 z^2} + \frac{e^{-kz}}{2kz} \right).
\eeaa
Define $\psi(z)= \psi_1(z)\cdot \psi_2(z)$ where 
\beaa
&&\psi_1(z)=(1-e^{-Nz})(1-e^{-Mz});\\
&&\psi_2(z)=\frac{1}{z(e^{z}-1)} - \frac{1}{z^2} + \frac{e^{-z}}{2z}.
\eeaa
It is easy to verify that the Laurent expansion of $\psi_2(z)$ is equal to $-\frac{5}{12}+\lambda(z)$ where $\lambda(z)$ is analytic and $\lambda(0)=0$. Thus $\psi_2(z)$ and $\psi(z)$ are analytic in the considered region.

Now we can express 
\bea\lbl{eq:logsum}
\log f(z) - z\cdot \sum_{k=1}^{\infty} (1-e^{-kz\cdot N})(1-e^{-kz\cdot M})\left( \frac{1}{k^2 z^2} - \frac{e^{-kz}}{2kz} \right) = \sum_{k=1}^{\infty} z\psi(kz).
\eea
Let us compute the summation on the RHS of \eqref{eq:logsum}. It is natural to use an integral to approximate the summation and we control the difference. 
Denote $\gamma$ the ray starting from origin and passing through $z$. It is clear the integral $\int_{\gamma} \psi(u)\,du$ is convergent. We are going to show that
\bea\lbl{eq:eqint}
\left|\sum_{k=1}^{\infty}z \psi(kz)-\int_{\gamma} \psi(u)\,du\right| = O(|z|).
\eea
Let $\gamma_k$ be the line segment on $\gamma$ from the point $(k-1)z$ to the point $kz$ for $k\ge 1$. Therefore, we have
\beaa
\left|\sum_{k=1}^{\infty} z\psi(k z) - \int_\gamma \psi(u)\,du\right| &\le & \left|z \psi(z) - \int_{\gamma_1} \psi(u)\, du\right|+ \left|\sum_{k=2}^{\infty}\left(z\psi(kz) - \int_{\gamma_k} \psi(u)  \,du\right) \right|.
\eeaa
Since $\psi(z)$ is analytic at $z=0$, we estimate the first term by
\beaa
\left|z \psi(z) - \int_{\gamma_1} \psi(u)\, du\right| = O(|z|).
\eeaa
For the other term, by integration by parts, we have
\beaa
\int_{\gamma_k} \psi(u)\,du &=& kz \psi(kz)-(k-1)z\psi((k-1)z) -  \int_{\gamma_k} u \psi'(u)\,du\\
&=& z\psi(kz) + \int_{\gamma_k} \big((k-1)z -u\big)\psi'(u)\,du.
\eeaa
Thus 
\beaa
\left|z\psi(kz) - \int_{\gamma_k} \psi(u)  \,du\right|  &=& \left|\int_{\gamma_k} \big((k-1)z -u\big)\psi'(u)\,du \right|\\
&\le& |z| \max_{u \in \gamma_k}\, (|u-(k-1)z| \cdot |\psi'(u)|)\\
&\le & |z|^2 \max_{u \in \gamma_k}|\psi'(u)| \le |z|^2 \max_{u \in \gamma_k} (|\psi_1'(u)\psi_2(u)| +|\psi_1(u)\psi_2'(u)|).
\eeaa
Since $|\psi_1(u)|$ and $|\psi_2(u)|$ are universally  bounded on $\gamma_k$ for $k\ge 2$, to show \eqref{eq:eqint}, it is enough to show that for $i=1,2$,
\begin{equation}\label{eq:estimateofphi}
|z|^2 \cdot \sum_{k=2}^{\infty} \max_{u \in \gamma_k} |\psi_i'(u)|=O(|z|).
\end{equation}
In the below we will first prove \eqref{eq:estimateofphi} for $\psi_2$, and then for $\psi_1$.

Note that 
\beaa
\psi_2'(u) = -\frac{1}{u^2(e^u-1)}-\frac{1}{u(e^u-1)}-\frac{1}{u^2(e^u-1)^2} + \frac{2}{u^3} - \frac{1}{u e^u}-\frac{1}{u^2 e^u}.
\eeaa
Recall $v/|z|=\text{Re}(z)/|z| \ge \lambda$. Since $|\psi_2'(u)|$ is analytic and thus bounded for $u \in \gamma_k$ where $2\le k \le 1/(\lambda|z|) +1$, it follows that 
\beaa
|z|^2 \cdot  \sum_{2\le k \le 1/\lambda|z| +1} \max_{u \in \gamma_k} |\psi_2'(u)| = O\left(|z|^2 \cdot \frac{1}{|z|}\right)=O(|z|).
\eeaa
For $k > 1/\lambda|z| +1$, we have $e^{(k-1)v} > e^{(k-1)\lambda|z|} > e$ and thus $e^{(k-1)v}-1\ge e^{(k-1)v}/2$. Therefore,
\beaa
&&|z|^2 \cdot  \sum_{k > 1/\lambda|z| +1} \max_{u \in \gamma_k} |\psi_2'(u)| \\
&\le& 2|z|^2 \cdot  \sum_{k > 1/\lambda|z| +1} \max_{u \in \gamma_k} \frac{1}{|u|^2 |e^u-1|}+ |z|^2 \cdot  \sum_{k > 1/\lambda|z| +1} \max_{u \in \gamma_k}\frac{1}{|u|^2 |e^u-1|^2}\\
&&\quad + 4|z|^2 \cdot  \sum_{k > 1/\lambda|z| +1} \max_{u \in \gamma_k}\frac{1}{|u|^3}\\
&\le& 6|z|^2 \cdot \sum_{k > 1/\lambda|z| +1}\frac{1}{(k-1)^2|z|^2 e^{(k-1)v}} + 4|z|^2 \cdot \sum_{k > \lambda|z| +1} \frac{1}{(k-1)^3|z|^3}\\
&\le&  \sum_{k > 1/\lambda|z|}\frac{6}{ e^{kv}} + \frac{4}{|z|} \sum_{k > 1/\lambda|z|} \frac{1}{k^3} =O(|z|),
\eeaa
where the last inequality follows from the estimates
\beaa
&&\sum_{k > 1/\lambda|z|}\frac{1}{e^{k v}} \le \sum_{k=0}^{\infty} \frac{1}{e^{kv}} = \frac{1}{1-e^{-v}}=O\left(\frac{1}{v}\right)=O\left(\frac{1}{|z|}\right);\\
&& \sum_{k > 1/\lambda |z|} \frac{1}{k^3} \le \int_{1/\lambda |z|}^{\infty} \frac{1}{x^3}\,dx =O(|z|^2).
\eeaa
Hence, we proved \eqref{eq:estimateofphi} for $\psi_2$. 

Now, let us prove \eqref{eq:estimateofphi} for $\psi_1$.  Since $\min_{u \in \gamma_k} |e^{Nu}| =\min_{u\in \gamma_k} e^{N \cdot \text{Re}(u)} \ge e^{(k-1)N\cdot \text{Re}(z)}=e^{(k-1)Nv}$, we obtain
\beaa
|z|^2 \cdot \sum_{k=2}^{\infty} \max_{u \in \gamma_k} |\psi_1'(u)|&=& |z|^2 \cdot \sum_{k=2}^{\infty} \max_{u \in \gamma_k}\left(N\frac{|1-e^{-Mu}|}{|e^{Nu}|}+M\frac{|1-e^{-Nu}|}{|e^{Mu}|}\right)\\
&\le& 2|z|^2 \cdot \sum_{k=2}^{\infty} \left(\frac{N}{e^{Nv(k-1)}}+ \frac{M}{e^{Mv(k-1)}}\right) \\
&\le & \frac{2|z|}{\lambda} \cdot \left(\frac{ N v \cdot e^{-vN}}{1-e^{-vN}}+\frac{Mv \cdot e^{-vM}}{1-e^{-v M}} \right) = O(|z|)
\eeaa
by the assumption $|z| \le v/\lambda$, where we used the fact that the function $\frac{x}{e^x-1}$ is bounded on $(0,\infty)$. 

This finishes the proof of \eqref{eq:estimateofphi}. So, \eqref{eq:eqint} is proved.

Since $\psi(u)$ is analytic in the considered region, by Cauchy integral theorem, it is standard to show that 
$$\int_{\gamma} \psi(u)\, du = \int_0^{\infty} \psi(x)\, dx.$$
Combining \eqref{eq:eqint} and \eqref{eq:logsum}, we obtain
\begin{equation}\label{eq:logvalue}
\log f(z)= z\cdot \sum_{k=1}^{\infty} (1-e^{-kz\cdot N})(1-e^{-kz\cdot M})\left( \frac{1}{k^2z^2} - \frac{e^{-kz}}{2kz} \right) + \int_{0}^{\infty} \psi(x)\,dx + O(|z|).
\end{equation}
It remains to estimate the first two terms on the RHS of \eqref{eq:logvalue}. 

We split the first term into two parts, 
\beaa
&&z\cdot \sum_{k=1}^{\infty} (1-e^{-kz\cdot N})(1-e^{-kz\cdot M})\left( \frac{1}{k^2z^2}- \frac{e^{-kz}}{2kz} \right) \\
&=& \frac{1}{z}\sum_{k=1}^{\infty} \frac{(1-e^{-kz\cdot N})(1-e^{-kz\cdot M})}{k^2} -\frac{1}{2}\sum_{k=1}^{\infty}\frac{e^{-kz}(1-e^{-kz\cdot N})(1-e^{-kz\cdot M})}{k}.
\eeaa
By the definition of Spence's function, we have 
\begin{align*}
 \frac{1}{z}\sum_{k=1}^{\infty} \frac{(1-e^{-kz\cdot N})(1-e^{-kz\cdot M})}{k^2}&=\frac{1}{z} \sum_{k=1}^{\infty} \left(\frac{1}{k^2} -\frac{e^{-k\cdot zN}}{k^2}-\frac{e^{-k\cdot zM}}{k^2}+\frac{e^{-k\cdot z(N+M)}}{k^2}\right)\\
&=\frac{1}{z} \left(\frac{\pi^2}{6} + \text{Li}_2(e^{-z(N+M)}) - \text{Li}_2(e^{-zN})-\text{Li}_2(e^{-zM})\right).
\end{align*}
Since $\sum_{k=1}^{\infty} e^{-k\cdot z}/k = -\log(1-e^{-z})$, we have
\beaa
&&\frac{1}{2}\sum_{k=1}^{\infty}\frac{e^{-kz}(1-e^{-kz\cdot N})(1-e^{-kz\cdot M})}{k}\\
&=&-\frac{1}{2}\log(1-e^{-z})-\frac{1}{2}\sum_{k=1}^{\infty}\frac{1}{k}\left(e^{-kz(N+1)}+e^{-kz(M+1)}-e^{-kz(M+N+1)}\right)\\
&=&-\frac{1}{2}\log(1-e^{-z}) + \frac{1}{2}\log\frac{(1-e^{-z(N+1)})(1-e^{-z(M+1)})}{1-e^{-z(M+N+1)}}\\
&=&\frac{1}{2}\log\left(\frac{1}{z}\right)  + \frac{1}{2}\log\frac{(1-e^{-zN})(1-e^{-zM})}{1-e^{-z(M+N)}}+O(|z|),
\eeaa
where the last identity follows from the fact that $\lim_{u\to0} \frac{1}{u} \log(\frac{1-e^{-u}}{u})$ exists, and thus, the function $\frac{1}{u} \log(\frac{1-e^{-u}}{u})$ is bounded around $u=0$, implying that
\beaa
\frac{1}{2}\log\left(\frac{1}{z}\right) = -\frac{1}{2}\log(1-e^{-z}) +O(|z|).
\eeaa
Hence, we have for the first term on the RHS of \eqref{eq:logvalue} that
\begin{equation}\label{eq:logvalueinfiniteseries}
\begin{split}
&z\cdot \sum_{k=1}^{\infty} (1-e^{-kz\cdot N})(1-e^{-kz\cdot M})\left( \frac{1}{k^2z^2}- \frac{e^{-kz}}{2kz} \right)\\
&=\frac{1}{z} \left(\frac{\pi^2}{6} + \text{Li}_2(e^{-z(N+M)}) - \text{Li}_2(e^{-zN})-\text{Li}_2(e^{-zM})\right)\\
&\quad-\frac{1}{2}\log\left(\frac{1}{z}\right)  - \frac{1}{2}\log\frac{(1-e^{-zN})(1-e^{-zM})}{1-e^{-z(M+N)}}+O(|z|).
\end{split}
\end{equation}

Now we calculate the second term  on the RHS of \eqref{eq:logvalue}. We have
\beaa
\int_{0}^{\infty} \psi(x) \,dx &=& \int_{0}^{\infty} \psi_2(x) \,dx + \int_0^{\infty} (e^{-(N+M)x}-e^{-Nx}-e^{-Mx})\psi_2(x)\,dx\\
&=&-\log\sqrt{2\pi} + \int_0^{\infty} (e^{-(N+M)x}-e^{-Nx}-e^{-Mx})\psi_2(x)\,dx,
\eeaa
where the calculation of $\int_{0}^{\infty} \psi_2(x) \,dx=-\log\sqrt{2\pi}$ is given in \cite[Page 125]{Pos88}. Since $\psi_2(x)$ is bounded, we have
\beaa
\left|\int_0^{\infty} (e^{-(N+M)x}-e^{-Nx}-e^{-Mx})\psi_2(x)\,dx\right| &=& O\left(\int_{0}^{\infty} (e^{-(N+M)x}+e^{-Nx}+e^{-Mx})\,dx\right)\\
&=&O\left(\frac{1}{M+N} + \frac{1}{N} + \frac{1}{M} \right) = O(|z|).
\eeaa
Hence,
\begin{equation}\label{eq:logvaluelastterm}
\int_{0}^{\infty} \psi(x) \,dx =-\log\sqrt{2\pi}+O(|z|).
\end{equation}

Putting the equations \eqref{eq:logvalue}, \eqref{eq:logvalueinfiniteseries} and \eqref{eq:logvaluelastterm} together, we prove Lemma \ref{lem:asymp}.
\end{proof}

We will use the following lemma to change the variables in our calculations. Instead of using directly the variables $\alpha$ and $\beta$ in Theorem \ref{thm:asymp}, we will use  the variables $c_1$ and $c_2$ defined in \eqref{eq:c1c2} in the next lemma, which will significantly simplify our calculations. So we will derive our asymptotic formula of $p_n(N,M)$ in terms of $c_1$ and $c_2$ first, and  change back to $\alpha$ and $\beta$ in the end.

\begin{lemma}\label{lem:changeofvariables}
For every $\alpha,\beta\in (0,\infty)$ such that $\alpha\beta>2$, there exist unique $c_1,c_2>0$ satisfying the equations:
\begin{equation}\label{eq:c1c2}
c_1=\alpha \cdot A(c_1, c_2), \quad c_2 = \beta \cdot A(c_1,c_2),
\end{equation}
where 
\begin{equation}\label{eq:A}
A(c_1,c_2)=\sqrt{\int_{0}^{c_1} \frac{x}{e^x-1}\,dx -\int_{c_2}^{c_1+c_2} \frac{x}{e^x-1}\,dx}.
\end{equation}
Moreover, if $\min(\alpha,\beta)\ge\sqrt{2}+\delta$ for some $\delta>0$, then there exist two constants $\overline A>0$ and $ \bar c>0$ both depending only on $\delta$ such that
\begin{equation}\label{eq:lowerboundofc}
\min(c_1,c_2)\ge \bar c\quad\mbox{and}\quad A(c_1,c_2)\ge\overline A.
\end{equation}
In particular, if $\min(\alpha,\beta)\ge 4$, then $\min(c_1,c_2)\ge 13/5$.

\end{lemma}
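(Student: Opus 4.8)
The plan is to reduce the two-dimensional system \eqref{eq:c1c2} to a single scalar equation by exploiting its homogeneity. Dividing the two equations gives $c_1/c_2 = \alpha/\beta$, so I would set $c_1 = \alpha t$ and $c_2 = \beta t$ for a new scalar unknown $t>0$. Substituting into either equation of \eqref{eq:c1c2}, the common factor of $\alpha$ (resp. $\beta$) cancels and both equations collapse to the single equation
\begin{equation*}
t = A(\alpha t,\beta t) = \sqrt{\,\int_0^{\alpha t}\frac{x}{e^x-1}\,dx - \int_{\beta t}^{(\alpha+\beta)t}\frac{x}{e^x-1}\,dx\,}.
\end{equation*}
Squaring, it suffices to show that the function
\begin{equation*}
\Phi(t) := \int_0^{\alpha t}\frac{x}{e^x-1}\,dx - \int_{\beta t}^{(\alpha+\beta)t}\frac{x}{e^x-1}\,dx - t^2
\end{equation*}
has a unique positive zero (note that via \eqref{eq:otherexp} one may also write the bracket as $\mathrm{Li}_2(1-e^{-\alpha t}) - \mathrm{Li}_2(1-e^{-(\alpha+\beta)t}) + \mathrm{Li}_2(1-e^{-\beta t})$, matching the equation for $g(\alpha,\beta)$ in Theorem \ref{thm:asymp}, which is how the two lemmas connect). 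Once $t$ is found, $c_1=\alpha t$, $c_2=\beta t$ are forced, giving both existence and uniqueness.

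For existence and uniqueness of the zero of $\Phi$, I would study $\Psi(t) := \Phi(t)/t^2 + 1 = A(\alpha t,\beta t)^2/t^2$, or work with $\Phi$ directly. As $t\to 0^+$, the integrand $x/(e^x-1)\to 1$, so the bracket behaves like $\alpha t - (((\alpha+\beta)t - \beta t)) + O(t^2) = \alpha t - \alpha t + O(t^2) = O(t^2)$; more precisely a second-order Taylor expansion of $x/(e^x-1) = 1 - x/2 + O(x^2)$ gives the bracket $= \tfrac12(\alpha\beta)t^2 + O(t^3)$, hence $\Phi(t) = (\tfrac{\alpha\beta}{2}-1)t^2 + O(t^3)$, which is \emph{positive} for small $t>0$ exactly because $\alpha\beta>2$. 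As $t\to\infty$, the three dilogarithm terms converge to finite constants (each $\mathrm{Li}_2(1-e^{-\cdot})\to \mathrm{Li}_2(1)=\pi^2/6$, but with cancellation the bracket stays bounded), while $-t^2\to-\infty$, so $\Phi(t)\to -\infty$. By the intermediate value theorem $\Phi$ has at least one positive zero. For uniqueness I would show that at any zero $\Phi$ is strictly decreasing: differentiating,
\begin{equation*}
\Phi'(t) = \frac{\alpha^2 t}{e^{\alpha t}-1} - \frac{(\alpha+\beta)^2 t}{e^{(\alpha+\beta)t}-1} + \frac{\beta^2 t}{e^{\beta t}-1} - 2t,
\end{equation*}
and the condition $\Phi(t)=0$ says exactly $\int_0^{\alpha t}\frac{x}{e^x-1}dx - \int_{\beta t}^{(\alpha+\beta)t}\frac{x}{e^x-1}dx = t^2$; I expect that on the zero set one can bound $\Phi'(t) < 0$ by comparing $\frac{u^2}{e^u-1}$ against a suitable average of $\frac{x}{e^x-1}$ over $[0,u]$ (the function $x\mapsto x/(e^x-1)$ is decreasing and convex, so $\frac{u^2}{e^u-1} \le \int_0^u \frac{x}{e^x-1}dx$, and similar midpoint/convexity bounds on the shifted interval). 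Establishing this sign at every zero is the main obstacle; it is the one genuinely analytic inequality in the lemma, and it is also what forces the $\alpha\beta>2$ hypothesis to be exactly the right threshold.

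Finally, for the quantitative bounds \eqref{eq:lowerboundofc}: assuming $\min(\alpha,\beta)\ge\sqrt2+\delta$, I would lower-bound $t$ (equivalently $A(c_1,c_2)=t$, since $c_1=\alpha t\ge(\sqrt2+\delta)t$ and likewise for $c_2$) by exhibiting an explicit $t_0=t_0(\delta)>0$ with $\Phi(t_0)\ge 0$, using monotonicity of $\Phi$ past its zero; concretely, since $\Phi(t) \ge (\tfrac{\alpha\beta}{2}-1)t^2 + O(t^3)$ near $0$ with $\tfrac{\alpha\beta}{2}-1 \ge \tfrac{(\sqrt2+\delta)^2}{2}-1 > 0$ bounded below, one gets a uniform lower bound on the zero. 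Then $\overline A = \bar c = $ that bound works, and $\min(c_1,c_2)\ge(\sqrt2+\delta)\,\bar c$. For the explicit claim with $\min(\alpha,\beta)\ge 4$: one plugs $\alpha=\beta=4$ into $\Phi$ (monotonicity in $\alpha,\beta$ of the relevant quantities handles $\ge 4$), checks numerically that $t = 13/20$ gives $\Phi(13/20)\ge 0$ so the zero $t\ge 13/20$, and then $\min(c_1,c_2) = 4t \ge 4\cdot 13/20 = 13/5$. This last part is a finite computation.
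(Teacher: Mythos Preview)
Your reduction to a single scalar equation is exactly the paper's strategy (the paper parameterizes by $c_1$ rather than by $t=A(c_1,c_2)$, but the two are equivalent via $c_1=\alpha t$), and your analysis of the endpoints $\Phi(0^+)>0$, $\Phi(+\infty)=-\infty$ is correct and matches the paper's use of $\alpha\beta>2$.

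The real gap is precisely where you flag it: the uniqueness step. Your proposed route---show $\Phi'(t)<0$ at every zero via the inequality $u^2/(e^u-1)\le\int_0^u x/(e^x-1)\,dx$---does not obviously close, and you do not carry it out. The paper's device here is worth knowing: rather than analyze $\Phi'$ at zeros, it proves that $\Phi$ is unimodal (strictly increasing, then strictly decreasing) on $(0,\infty)$. The trick is to absorb the $-2t$ term into the others by setting
\[
\phi(u)=\frac{u}{e^u-1}+\frac{u}{\alpha\beta},
\]
so that $\Phi'(t)=\alpha\,\phi(\alpha t)+\beta\,\phi(\beta t)-(\alpha+\beta)\,\phi((\alpha+\beta)t)$. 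Since $u/(e^u-1)$ is convex, $\phi$ is convex; moreover $\phi'(0)=-\tfrac12+\tfrac{1}{\alpha\beta}<0$ and $\phi'(+\infty)>0$, so $\phi$ has a unique minimum at some $u_0>0$. For $(\alpha+\beta)t\le u_0$ the monotonicity of $\phi$ gives $\Phi'(t)>0$; for $(\alpha+\beta)t>u_0$ one differentiates again and uses that $\phi'$ is increasing to get $\Phi''(t)<0$. Thus $\Phi'$ is positive near $0$ and eventually strictly decreasing to $-\infty$, hence changes sign exactly once, and $\Phi$ is unimodal. This simultaneously yields uniqueness and the unimodality you implicitly need later to conclude ``$\Phi(t_0)\ge 0\Rightarrow$ the zero is $\ge t_0$.''

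A second, smaller gap is the final numerical claim. Your reduction ``plug in $\alpha=\beta=4$; monotonicity handles $\ge 4$'' asserts $g(\alpha,\beta)\ge g(4,4)$ for all $\alpha,\beta\ge 4$, which you have not proved. The paper avoids this by a comparison that removes the $\beta$-dependence: assuming WLOG $\alpha\le\beta$, the decrease of $x/(e^x-1)$ gives
\[
\Phi(t)\ \ge\ \int_0^{\alpha t}\frac{x}{e^x-1}\,dx-\int_{\alpha t}^{2\alpha t}\frac{x}{e^x-1}\,dx-t^2,
\]
and then one bounds $-t^2\ge -(\alpha t)^2/16$ using $\alpha\ge 4$. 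The resulting lower bound depends only on $c_1=\alpha t$; an integration by parts plus the decrease of $x^2 e^x/(e^x-1)^2$ shows it is positive at $c_1=13/5$, hence $\min(c_1,c_2)=c_1\ge 13/5$ uniformly. The same comparison, with the limit $c_1\to 0^+$ giving ratio $2$, is what furnishes the $\delta$-dependent bound \eqref{eq:lowerboundofc}; your small-$t$ Taylor expansion gives only a neighborhood-of-zero statement and does not directly produce a uniform $\bar c(\delta)$.
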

\begin{proof}
Note that the quantity inside of the square root of \eqref{eq:A} is positive since the function $\frac{x}{e^x-1}$ is strictly decreasing for $x\ge 0$. 
Denote $\lambda=\beta/\alpha$ for brevity. Consider $$b(t) =A(t,\lambda t)^2 - (t/\alpha)^2,\quad t>0.$$


We have
\beaa
b'(t)&=&\frac{t}{e^t-1}-\frac{\lambda^2 t}{e^{\lambda t}-1}-\frac{(1+\lambda)^2t}{e^{(1+\lambda)t}-1}-\frac{2t}{\alpha^2}\\
&=& \phi(t)+\lambda \phi(\lambda t)-(1+\lambda)\phi((1+\lambda)t),
\eeaa
where
\[
\phi(t)=\frac{t}{e^t-1}+\frac{t}{\alpha\beta}.
\]
It is elementary to check that $\phi$ is convex for $t\ge 0$, and thus, $\phi'$ is strictly increasing. Moreover, one can also verify that $\phi'(0)=-\frac 12+\frac{1}{\alpha\beta}<0$ since $\alpha\beta>2$, and $\phi'(t)>0$ if $t$ is large. Hence, there exists $t_0>0$ such that $\phi'(t)<0$ in $(0,t_0)$ and $\phi'(t)>0$ in $(t_0,\infty)$.  That is, $\phi(t)$ is strictly decreasing in $(0,t_0)$. Therefore, if $0<(1+\lambda)t\le t_0$, then
\[
b'(t)=\phi(t)+\lambda \phi(\lambda t)-(1+\lambda)\phi((1+\lambda)t)>0.
\]
If $(1+\lambda)t> t_0$, since $\phi'$ is strictly increasing, we have
\beaa
b''(t)&=&\phi'(t)+\lambda^2 \phi'(\lambda t)-(1+\lambda)^2\phi'((1+\lambda)t)\\
&<&\phi'((1+\lambda)t)+\lambda^2\phi'((1+\lambda)t)-(1+\lambda)^2\phi'((1+\lambda)t)<0.
\eeaa
Hence, we have that $b'(t)$ is positive when $0< t\le \frac{t_0}{1+\lambda}$, and then is decreasing for $t>\frac{t_0}{1+\lambda}$. Since $b'(t)\to-\infty$ as $t\to\infty$, then we know that there exists $\bar t$ such that $b'(t)>0$ if $t\in (0,\bar t)$ and $b'(t)<0$ if $t\in (\bar t,\infty)$. That is, $b(t)$ is strictly increasing when $t\in (0,\bar t)$ and $b(t)$ is strictly decreasing when $t\in (\bar t,\infty)$. Since $b(0)=0$ and $\lim_{t\to +\infty} b(t) = -\infty$, there exists a unique $c_1>0$ such that $b(c_1)=0$. This proves the existence and uniqueness of $c_1,c_2>0$, which are solutions to \eqref{eq:c1c2}. 

To prove \eqref{eq:lowerboundofc}, without loss of generality, we assume $\alpha\le\beta$ and thus $c_1\le c_2$.  Since the function $\frac{x}{e^x-1}$ is strictly decreasing, we observe that
\[
c_1^2=\alpha^2 A(c_1,c_2)\ge\alpha^2\left(\int_{0}^{c_1} \frac{x}{e^x-1}\,dx -\int_{c_1}^{2c_1} \frac{x}{e^x-1}\,dx\right).
\]
That is
\[
\alpha^2\le\frac{c_1^2}{\int_{0}^{c_1} \frac{x}{e^x-1}\,dx -\int_{c_1}^{2c_1} \frac{x}{e^x-1}\,dx}\to 2\quad\mbox{as }c_1\to 0^+.
\]
Therefore, if $\alpha\ge\sqrt{2}+\delta$ for some $\delta>0$, then $c_1\ge \bar c$ for some $\bar c>0$ depending only on $\delta$. Moreover, for 
\[
B(c_1):= \int_{0}^{c_1} \frac{x}{e^x-1}\,dx -\int_{c_1}^{2c_1} \frac{x}{e^x-1}\,dx,
\]
we have
\[
\frac{d}{d c_1}B(c_1)=\frac{2c_1}{e^{c_1}+1}>0.
\]
Therefore, $B(c_1)$ is increasing, and thus, for all $c_2\ge c_1\ge\bar c$, we have
\[
A(c_1,c_2)\ge \sqrt{B(c_1)}\ge  \sqrt{\int_{0}^{\bar c} \frac{x}{e^x-1}\,dx -\int_{\bar c}^{2\bar c} \frac{x}{e^x-1}\,dx}=:\overline A>0.
\]

Now we will show that $\min(c_1,c_2)\ge 13/5$ if we suppose $\min(\alpha,\beta)\ge 4$. Without loss of generality, we assume $\alpha\le\beta$. Hence, $\lambda=\beta/\alpha\ge 1$. Since $\frac{x}{e^x-1}$ is decreasing, 
\begin{align*}
b(t)\ge A(t, t)^2 - (t/\alpha)^2 &= \int_0^t\frac{x}{e^x-1}\,dx-\int_{t}^{2t}\frac{x}{e^x-1}\,dx-\frac{t^2}{\alpha^2} \\
&=\frac{t^2}{e^t-1}-\frac{2t^2}{e^{2t}-1} + \int_0^t\frac{x^2e^x}{(e^x-1)^2}\,dx-\int_t^{2t}\frac{x^2e^x}{(e^x-1)^2}\,dx-\frac{t^2}{\alpha^2}\\
&\ge \frac{t^2}{e^t-1}-\frac{2t^2}{e^{2t}-1}-\frac{t^2}{\alpha^2},
\end{align*}
where we used integration by parts in the second equality, and the fact that $\frac{x^2e^x}{(e^x-1)^2}$ is decreasing in $x$ in the last inequality. From this, one can verify that if $\alpha \ge 4$, then $b(13/5)>0$. It is clear that $b(0)=0$ and $\lim_{t\to-\infty}b(t)=-\infty$. Moreover, we know from the proof of Lemma \ref{lem:changeofvariables} that $b(t)$ (which is the same as the $b(t)$ there) is strictly increasing in $(0,\bar t)$ and strictly decreasing in $(\bar t,\infty)$ for some $\bar t>0$. Therefore, we know that the unique solution $c_1$ of $b(c_1)=0$ satisfies that $(c_2>)c_1>13/5$.
\end{proof}

By \eqref{eq:otherexp}, we also can express
\begin{align}\lbl{eq:A2}
A(c_1,c_2)^2 &= \text{Li}_2(1-e^{-c_1})+\text{Li}_2(1-e^{-c_2})-\text{Li}_2(1-e^{-c_1-c_2})\nonumber\\
&=\frac{\pi^2}{6} +\text{Li}_2(e^{-c_1-c_2})-\text{Li}_2(e^{-c_1})-\text{Li}_2(e^{-c_2})\nonumber\\
&\quad-(c_1+c_2)\log(1-e^{-c_1-c_2})+c_1\log(1-e^{-c_1})+c_2\log(1-e^{-c_2}).
\end{align}
Therefore,
\begin{equation}\label{eq:A3}
\begin{split}
&\frac{\pi^2}{6} +\text{Li}_2(e^{-c_1-c_2})-\text{Li}_2(e^{-c_1})-\text{Li}_2(e^{-c_2})\\
&= A(c_1,c_2)^2 +c_1\log\left(\frac{1-e^{-c_1-c_2}}{1-e^{-c_1}}\right)+c_2\log\left(\frac{1-e^{-c_1-c_2}}{1-e^{-c_2}}\right)\ge A(c_1,c_2)^2. 
\end{split}
\end{equation}
These two formulae \eqref{eq:A2} and \eqref{eq:A3} will be used later.

The next technical lemma will be used to estimate the error term in Section \ref{subsec:error}, which we include here for convenience. For a complex number $z$, we denote $\text{Re}(z)$ as the real part of $z$, and $\text{Im}(z)$ as the imaginary part of $z$.
\begin{lemma}\label{lem:technical}
Suppose $\min(\alpha,\beta)\ge 4$, and let $c_1,c_2$ be as in Lemma \ref{lem:changeofvariables}. Define two functions:
\begin{align}
\eta_{c}(x) &= \text{Re}(\text{Li}_2(e^{-c-ic x})) + x\cdot \text{Im}(\text{Li}_2(e^{-c-ic x})),\label{eq:definitionf}\\
H(x)&=\frac{1}{1+x^2} \left( \frac{\pi^2}{6} +\eta_{c_1+c_2}(x) - \eta_{c_1}(x) -\eta_{c_2}(x)\right).\label{eq:definitionG}
\end{align}
Then there exists a universal constant $\delta>0$ such that
\[
H(x)\le H(0)-\frac{\delta x^2}{1+x^2}\quad\mbox{for all }x>0.
\]
\end{lemma}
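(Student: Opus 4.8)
The plan is to reduce the claim to a statement about a single real function and its derivatives at $x=0$, then use convexity/uniform control provided by the hypothesis $\min(\alpha,\beta)\ge 4$ (equivalently $\min(c_1,c_2)\ge 13/5$ from Lemma \ref{lem:changeofvariables}). First I would record the basic behavior of $\eta_c$. Since $\text{Li}_2(e^{-c-icx}) = \sum_{k\ge 1} e^{-kc}e^{-ikcx}/k^2$, one has $\text{Re}\,\text{Li}_2(e^{-c-icx}) = \sum_k e^{-kc}\cos(kcx)/k^2$ and $\text{Im}\,\text{Li}_2(e^{-c-icx}) = -\sum_k e^{-kc}\sin(kcx)/k^2$, so $\eta_c(x) = \sum_k e^{-kc}k^{-2}\big(\cos(kcx) - kcx\sin(kcx)\big)$. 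In particular $\eta_c(0) = \text{Li}_2(e^{-c})$, $\eta_c'(x) = -\sum_k e^{-kc}c^2 x\cos(kcx)$ (the non-$x$ terms cancel), so $\eta_c'(0)=0$ and $\eta_c''(0) = -c^2\sum_k e^{-kc} = -c^2 e^{-c}/(1-e^{-c})$. Thus $\eta_c$ is even, $\eta_c(0)$ is a maximum-type value, and the key quantity is $\eta_c(0)-\eta_c(x)$.

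Write $N(x) = \frac{\pi^2}{6} + \eta_{c_1+c_2}(x) - \eta_{c_1}(x) - \eta_{c_2}(x)$, so $H(x) = N(x)/(1+x^2)$ and $H(0) = N(0) = \frac{\pi^2}{6} + \text{Li}_2(e^{-c_1-c_2}) - \text{Li}_2(e^{-c_1}) - \text{Li}_2(e^{-c_2})$, which by \eqref{eq:A3} equals $A(c_1,c_2)^2$ plus a nonnegative term, hence $H(0)\ge A(c_1,c_2)^2 > 0$; the lower bound \eqref{eq:lowerboundofc} also gives a universal constant lower bound $H(0)\ge \overline A^2$. The inequality $H(x)\le H(0) - \delta x^2/(1+x^2)$ is equivalent, after multiplying by $1+x^2$, to $N(x) \le H(0)(1+x^2) - \delta x^2 = H(0) + (H(0)-\delta)x^2$, i.e. to
\[
\big(\eta_{c_1}(0)-\eta_{c_1}(x)\big) + \big(\eta_{c_2}(0)-\eta_{c_2}(x)\big) - \big(\eta_{c_1+c_2}(0)-\eta_{c_1+c_2}(x)\big) \;\ge\; (\delta - H(0))\,x^2 \cdot(-1),
\]
so I would instead phrase the target as: $N(0) - N(x) \ge \big(\delta - (H(0) - \text{[bound]})\big)$... — more cleanly, it suffices to show $N(0)-N(x)\ge (H(0)-\delta)x^2$ is false-signed; the correct reduction is $N(x)-N(0)\le (H(0)-\delta)x^2$, i.e. $D(x) := \eta_{c_1+c_2}(x)-\eta_{c_1+c_2}(0) - \big(\eta_{c_1}(x)-\eta_{c_1}(0)\big) - \big(\eta_{c_2}(x)-\eta_{c_2}(0)\big) \le (H(0)-\delta)x^2$. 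I would prove this by splitting into a small-$x$ regime and a large-$x$ regime. For small $x$ (say $|x|\le x_0$ for a universal $x_0$), Taylor expansion gives $D(x) = \frac12\big(\eta_{c_1+c_2}''(0) - \eta_{c_1}''(0) - \eta_{c_2}''(0)\big)x^2 + O(x^4)$, and using $\eta_c''(0) = -c^2 e^{-c}/(1-e^{-c})$ one checks the bracket equals $-L$-type quantity; crucially, since $\frac{c^2 e^{-c}}{1-e^{-c}}$ is increasing-then... — in any case with $c_1,c_2\ge 13/5$ one verifies the leading coefficient is $\le H(0) - 2\delta$ for a universal $\delta$, and the $O(x^4)$ is absorbed (here using that $|\eta_c^{(4)}|$ is uniformly bounded in $c\ge 13/5$ since each $\sum_k e^{-kc}k^{2}c^4$ is decreasing in $c$). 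For large $x$ ($|x|\ge x_0$), I would bound $D(x) \le \eta_{c_1+c_2}(x)-\eta_{c_1+c_2}(0) + \big(\eta_{c_1}(0)-\eta_{c_1}(x)\big)^- \dots$ — more robustly, bound $|\eta_c(x)-\eta_c(0)|\le C(1+x^2)$ uniformly, actually $\eta_c(x) \le \eta_c(0)$ is not generally true, but $|\eta_c(x)| \le \text{Li}_2(e^{-c}) + c^2x\sum_k e^{-kc}/k \le \text{Li}_2(1) + c^2|x|\cdot(-\log(1-e^{-c}))$; this grows only linearly in $x$ while $H(0)\ge \overline A^2$ times $x^2$ dominates, so for $|x|$ large the inequality is automatic, and on the compact band $x_0\le|x|\le x_1$ one uses continuity and the strict inequality $H(x) < H(0)$ (which I would establish separately).

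The main obstacle is the middle/large-$x$ analysis: showing $H(x) < H(0)$ strictly for all $x>0$, not merely $\le$, and doing so with a universal gap. The small-$x$ quadratic estimate is essentially an algebraic inequality among $c_1^2 e^{-c_1}/(1-e^{-c_1})$, $c_2^2e^{-c_2}/(1-e^{-c_2})$, $(c_1+c_2)^2 e^{-(c_1+c_2)}/(1-e^{-(c_1+c_2)})$ and $A(c_1,c_2)^2$, which should follow from monotonicity of $x^2/(e^x-1)$ and the defining relation for $A$; this mirrors the computation of $L(\alpha,\beta)$ and I expect it to go through with $c_i\ge 13/5$ exactly as the paper's numerical threshold suggests. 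The delicate point is ruling out a second near-maximum of $H$ at some moderate $x$: I would handle this by writing $H(x)(1+x^2) - H(0)(1+x^2) = D(x) - H(0)x^2$ and showing $D(x) < H(0)x^2$ pointwise via the bound $D(x) \le \big(\eta_{c_1+c_2}(x)-\eta_{c_1+c_2}(0)\big) + 2\text{Li}_2(1)$ combined with the fact, provable from the cosine series, that $\eta_c(x) - \eta_c(0) = \sum_k e^{-kc}k^{-2}\big((\cos kcx - 1) - kcx\sin kcx\big) \le \sum_k e^{-kc}k^{-2}\cdot 2k c|x| \cdot \mathbf{1}[\text{small}]$ — i.e. reducing to elementary estimates on $\sum e^{-kc}$-weighted trigonometric sums. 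All constants can be taken universal because every $c_i$ lies in the fixed range $[13/5,\infty]$, and monotone convergence handles the endpoint $c_i=\infty$ (where the corresponding $\eta$ term drops out), so a compactness argument on $[13/5,\infty]$ yields a single $\delta>0$.
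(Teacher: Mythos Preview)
Your reduction to showing $F(x)-F(0)\le (F(0)-\delta)x^2$ (with $F$ your $N$) is the right reformulation and matches the paper. But the three–regime strategy has a genuine gap in the middle range: you defer to ``continuity and the strict inequality $H(x)<H(0)$ (which I would establish separately)'' together with compactness on $[13/5,\infty]^2$, yet you never establish that strict inequality, and it is essentially the qualitative content of the lemma itself. Without it the compactness step has nothing to feed on. There is also a computational slip that propagates: the series for $\eta_c$ is $\sum_k e^{-kc}k^{-2}\big(\cos(kcx)-x\sin(kcx)\big)$, not with $-kcx\sin(kcx)$, and consequently $\eta_c''(0)=-\dfrac{c^2}{e^{c}-1}+2c\log(1-e^{-c})$, with a second term you dropped; that term is exactly what makes the small-$x$ numerical comparison nontrivial.

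The paper sidesteps both the regime split and any compactness appeal with a single global derivative bound. It proves $F'(x)\le (2F(0)-2\delta)x$ for \emph{all} $x>0$, which upon integration gives the lemma. The point is that in the series for $F'(x)$ every term can be crudely bounded using $|\sin\theta|\le|\theta|$ and $|\cos|\le 1$, yielding
\[
\frac{F'(x)}{x}\ \le\ \zeta(c_1+c_2)+\zeta(c_1)+\zeta(c_2)-2\text{Li}_2(e^{-(c_1+c_2)})-2\text{Li}_2(e^{-c_1})-2\text{Li}_2(e^{-c_2}),
\]
where $\zeta(c)=\dfrac{c^2}{e^c-1}-2c\log(1-e^{-c})+2\text{Li}_2(e^{-c})$; since $\zeta$ is strictly decreasing with $\zeta(0)=\pi^2/3$, the inequality $F'(x)/x-2F(0)\le -2\delta$ reduces to the single numerical check $3\zeta(13/5)<\pi^2/3$. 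This is precisely where the hypothesis $\min(\alpha,\beta)\ge 4$ (hence $\min(c_1,c_2)\ge 13/5$) enters. So rather than proving strict inequality first and then extracting uniformity, you should bound $F'(x)/x$ directly by a constant independent of $x$ and compare that constant to $2F(0)$.
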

\begin{proof}
For convenience, denote $$F(x):= \frac{\pi^2}{6} +\eta_{c_1+c_2}(x) - \eta_{c_1}(x) -\eta_{c_2}(x).$$ 
Then $F(0)=H(0)$. We are going to show there exists a universal constant $\delta>0$ such that
\bea\lbl{eq:ineq}
F'(x) \le 2F(0)x -2\delta x\quad\mbox{for all }x>0,
\eea
from which it follows by integrating both sides that 
\[
H(x)\le H(0)-\frac{\delta x^2}{1+x^2}\quad\mbox{for all }x>0.
\]

We have
\begin{align*}
F(x)&=\frac{\pi^2}{6}+\sum_{k=1}^\infty\frac{e^{-(c_1+c_2)k}}{k^2}\Big(\cos(c_1+c_2)kx-x\sin(c_1+c_2)kx\Big)\\
&-\sum_{k=1}^\infty\frac{e^{-c_1k}}{k^2}\Big(\cos (c_1kx)-x\sin(c_1kx)\Big)-\sum_{k=1}^\infty\frac{e^{-c_2k}}{k^2}\Big(\cos (c_2kx)-x\sin(c_2kx)\Big).
\end{align*}
Therefore,
\begin{align*}
&F'(x)\\
&=\sum_{k=1}^\infty\frac{e^{-(c_1+c_2)k}}{k^2}\Big(-(c_1+c_2)k\sin(c_1+c_2)kx-\sin(c_1+c_2)kx-(c_1+c_2)kx\cos(c_1+c_2)kx\Big)\\
&\quad+\sum_{k=1}^\infty\frac{e^{-c_1k}}{k^2}\Big(c_1k\sin (c_1kx)-\sin(c_1kx)+c_1kx \cos(c_1kx)\Big)\\
&\quad+\sum_{k=1}^\infty\frac{e^{-c_2k}}{k^2}\Big(c_2k\sin (c_2kx)-\sin(c_2kx)+c_2kx \cos(c_2kx)\Big)\\
&\le \sum_{k=1}^\infty\frac{e^{-(c_1+c_2)k}}{k^2}\Big((c_1+c_2)^2k^2x+2(c_1+c_2)kx\Big)+\sum_{k=1}^\infty\frac{e^{-c_1k}}{k^2}\Big(c_1^2k^2x+2c_1kx\Big)\\
&\quad+\sum_{k=1}^\infty\frac{e^{-c_2k}}{k^2}\Big(c_2^2k^2x+2c_2kx\Big)\\
&=x\Big(\frac{(c_1+c_2)^2}{e^{c_1+c_2}-1}-2(c_1+c_2)\log(1-e^{-(c_1+c_2)})\\
&\quad+\frac{c_1^2}{e^{c_1}-1}-2c_1\log(1-e^{-c_1})+\frac{c_2^2}{e^{c_2}-1}-2c_2\log(1-e^{-c_2})\Big).
\end{align*}


Let 
\[
\zeta(c)=\frac{c^2}{e^{c}-1}-2c\log(1-e^{-c})+2\text{Li}_2(e^{- c}).
\]
Then $\zeta(0)=\pi^2/3$ and $\lim_{c\to\infty}\zeta(c)=0$. Also, $\zeta$ is strictly decreasing since
\begin{align*}
\zeta'(c)=-\frac{c^2e^{c}}{(e^{c}-1)^2}<0.
\end{align*}
Therefore, there exists $\bar c$ to be the unique solution of $\zeta(\bar c)=\frac{111\pi^2}{1000}$. Then we have
\begin{align}
\frac{F'(x)}{x}-2F(0)&\le \zeta(c_1+c_2)+\zeta(c_1)+\zeta(c_2)-\frac{\pi^2}{3}-4\text{Li}_2(e^{-(c_1+c_2)}) \nonumber\\
&\le  3 \zeta(\bar c)-\frac{\pi^2}{3}\nonumber\\
&=-\frac{\pi^2}{3000}=:-2\delta\label{eq:choiceofdelta}
\end{align}
as long as $\min(c_1,c_2)\ge \bar c$. 

Let us estimate the value of $\bar c$. Notice that
\[
\text{Li}_2(e^{- c})=\sum_{k=1}^\infty\frac{e^{-ck}}{k^2}<e^{-c}+\sum_{k=2}^\infty\frac{e^{-ck}}{2^2}= e^{-c}+\frac{e^{-2c}}{4(1-e^{-c})}=\frac{4e^c-3}{4(e^{2c}-e^c)}.
\]
Hence
\[
\zeta(c)<\frac{c^2}{e^{c}-1}-2c\log(1-e^{-c})+\frac{4e^c-3}{2(e^{2c}-e^c)}.
\]
Using the above estimate, we can verify that $\zeta(13/5)<111\pi^2/1000$. Since $\zeta$ is  decreasing, we have $\bar c<13/5$. 

Since we assume that $\min(\alpha,\beta)\ge 4$, we know from Lemma \ref{lem:changeofvariables} that $\min(c_1,c_2)\ge 13/5$. Therefore,  $\min(c_1,c_2)\ge \bar c$, and thus, \eqref{eq:choiceofdelta} holds.  This proves Lemma \ref{lem:technical}.
\end{proof}

\section{Proof of the main result}\label{sec:estimate}
Now we will start to prove our main result.

\begin{proof}[Proof of Theorem \ref{thm:asymp}]
Since $p_n(N,M)=p_n(M,N)$, without loss of generality we can assume $N \le M$. Recall
\[
\alpha=\frac{N}{\sqrt{n}},\quad\beta=\frac {M}{\sqrt{n}},
\]
and also recall our assumption that $\min(\alpha,\beta)\ge 4$. Let $c_1,c_2$ be the solutions of \eqref{eq:c1c2} with $A(c_1,c_2)$ defined in \eqref{eq:A}. Set 
\beaa
v=\frac{A(c_1,c_2)}{\sqrt{n}} \quad \mbox{and} \quad w_0 =\frac{1}{n^{\frac{3}{4} - \frac{\epsilon}{3}}}.
\eeaa
Then $N \cdot v = c_1$ and $M\cdot v = c_2$. First, we observe from the definition of $A(c_1,c_2)$ in \eqref{eq:A} that
\begin{equation}\label{eq:Aboundabove}
A(c_1,c_2)\le \sqrt{\int_{0}^{\infty} \frac{x}{e^x-1}\,dx}<\infty.
\end{equation}
Secondly, since we have assumed that $\alpha, \beta\ge 4$, we know from Lemma \ref{lem:changeofvariables} that there exist two positive universal constants $\bar c$ and $\overline A$ such that
\begin{equation}\label{eq:lowerboundcA}
\min(c_1,c_2)\ge \bar c>0\quad\mbox{and}\quad A(c_1,c_2)\ge\overline A>0.
\end{equation}
These two universal lower bounds are important to prove the uniform estimate in the next subsection. Since we assumed that $N\le M$ at the beginning of this proof, it follows that $c_1\le c_2$.

Recall the integral \eqref{eq:cauchyintegral} that
\bea
p_n(N,M)&=&\frac{1}{2\pi}\int_{-\pi}^{\pi} e^{\log f(v+iw)}  e^{n(v+iw)}\,dw \nonumber\\
&=& \frac{1}{2\pi}\int_{-w_0}^{w_0} e^{\log f(v+iw)}  e^{n(v+iw)}\,dw + \frac{1}{2\pi}\int_{w_0}^{\pi} e^{\log f(v+iw)}  e^{n(v+iw)}\,dw\nonumber\\
&&\quad + \frac{1}{2\pi}\int_{-\pi}^{-w_0} e^{\log f(v+iw)}  e^{n(v+iw)}\,dw\nonumber\\
& =:& I_1 + I_2 +I_3.\label{eq:I1I2I3}
\eea
We will show that $I_1$ is the main term, and both $I_2$ and $I_3$ are lower order terms. 
\subsection{Estimate the main term $I_1$.} \label{subsec:mainterm}
Let $z=v+iw$ for $-w_0 \le w \le w_0$. Since $v=A(c_1,c_2)n^{-1/2}$ and $w_0 = n^{-3/4+\epsilon/3}$, by recalling \eqref{eq:lowerboundcA}, we have $\min(N,M)\cdot |z| \ge \min(c_1,c_2)\ge \bar c>0$ and $|w/v|\le |w_0/v|=o(1)$. Then we can apply Lemma \ref{lem:asymp} to obtain
 \beaa
 \log f(z) &=& \frac{1}{z} \left(\frac{\pi^2}{6} + \text{Li}_2(e^{-z(N+M)}) - \text{Li}_2(e^{-zN})-\text{Li}_2(e^{-zM})\right)\\
 && -\frac{1}{2}\log\left(\frac{1}{z}\right)-\frac{1}{2}\log \frac{(1-e^{-zN})(1-e^{-zM})}{1-e^{-z(N+M)}}-\log\sqrt{2\pi} + O(|z|).
 \eeaa
In particular,
  \bea\lbl{eq:real}
 \log f(v) &=& \frac{1}{v} \left(\frac{\pi^2}{6} + \text{Li}_2(e^{-v(N+M)}) - \text{Li}_2(e^{-vN})-\text{Li}_2(e^{-vM})\right)\nonumber\\
 && -\frac{1}{2}\log\left(\frac{1}{v}\right)-\frac{1}{2}\log \frac{(1-e^{-vN})(1-e^{-vM})}{1-e^{-v(N+M)}}-\log\sqrt{2\pi} + O(v).
 \eea
Now we compare the above two terms,
\bea\lbl{eq:complex}
&&\log f(v+iw)-\log f(v) \nonumber\\
&=& \frac{\pi^2}{6}\left(\frac{1}{z}-\frac{1}{v} \right) + \left(\frac{\text{Li}_2(e^{-z(N+M)})}{z} -\frac{\text{Li}_2(e^{-v(N+M)})}{v}\right) - \left(\frac{\text{Li}_2(e^{-zN})}{z} -\frac{\text{Li}_2(e^{-vN})}{v}\right) \nonumber\\
&&- \left(\frac{\text{Li}_2(e^{-zM})}{z} -\frac{\text{Li}_2(e^{-vM})}{v}\right) -\frac{1}{2}\left( \log\Big(\frac{1}{z}\Big)-\log\Big(\frac{1}{v}\Big) \right)+\frac{1}{2}\log\frac{1-e^{-z(N+M)}}{1-e^{-v(N+M)}}\nonumber \\
&& - \frac{1}{2}\log\frac{1-e^{-zN}}{1-e^{-vN}}- \frac{1}{2}\log\frac{1-e^{-zM}}{1-e^{-vM}}+ O(|z|)+ O(v)\nonumber\\
&=:& J_1 + J_2- J_3 -J_4 - H + K_1 - K_2 -K_3 + O(v).\label{eq:JHK}
\eea
We are going to estimate each term on the right hand side of \eqref{eq:JHK}.

Let us calculate $J_1$ first. Since
\beaa
\frac{1}{z}-\frac{1}{v} &=& \frac{1}{v+iw} - \frac{1}{v}=\frac{1}{v}\left( \frac{1}{1+i\frac{w}{v}} -1\right)\\
&=&-i\frac{w}{v^2} -\frac{w^2}{v^3} +O\left(\frac{w_0^3}{v^4}\right),
\eeaa
we obtain
\bea\lbl{eq:1z}
\frac{1}{z}=\frac{1}{v}-i\frac{w}{v^2} -\frac{w^2}{v^3} +O\left(\frac{w_0^3}{v^4}\right),
\eea
and thus,
\beaa
J_1 = \frac{\pi^2}{6}\left(-i\frac{w}{v^2} -\frac{w^2}{v^3}  \right)+O\left(\frac{w_0^3}{v^4}\right).
\eeaa

Now let us calculate $J_3$ in \eqref{eq:JHK}. Notice that $J_2$ and $J_4$ are in the same form as $J_3$, so they can be estimated in the same way. Using the property of Spence's function $\frac{\partial}{\partial u} \text{Li}_2(z) = -\frac{1}{z} \log(1-z)$ for $z \in \mathbb{C}\setminus [1,\infty)$, one has
\beaa
&&\frac{\partial}{\partial u} \text{Li}_2(e^{-u}) = e^{u}\log(1-e^{-u}) e^{-u} = \log(1-e^{-u});\\
&& \frac{\partial^2}{\partial u^2} \text{Li}_2(e^{-u}) = \frac{e^{-u}}{1-e^{-u}} = \frac{1}{e^u-1}.
\eeaa
Therefore, with noting that $N\cdot v = c_1$,
\beaa
\text{Li}_2(e^{-zN}) - \text{Li}_2(e^{-vN}) = iwN\log(1-e^{-vN}) - \frac{1}{2} \frac{1}{e^{vN}-1} w^2N^2 + O\left(\frac{w_0^3}{v^3}\right),
\eeaa
where in the last term $O(\cdot)$ we used that $c_1\ge \bar c>0$ independent of $n$.
Thus 
\beaa
&&J_3=\frac{\text{Li}_2(e^{-zN})}{z} -\frac{\text{Li}_2(e^{-vN})}{v} \\
&=& \frac{1}{z} \left( \text{Li}_2(e^{-zN}) -\text{Li}_2(e^{-vN})\right) + \left( \frac{1}{z} - \frac{1}{v}\right)  \text{Li}_2(e^{-vN})\\
&=&\left( \frac{1}{v}-i\frac{w}{v^2} -\frac{w^2}{v^3} +O\Big(\frac{w_0^3}{v^4}\Big) \right)\cdot \left(iwN\log(1-e^{-vN}) - \frac{1}{2} \frac{1}{e^{vN}-1} w^2N^2 + O\Big(\frac{w_0^3}{v^3}\Big) \right)\\
&&\quad + \left(-i\frac{w}{v^2} -\frac{w^2}{v^3} +O\Big(\frac{w_0^3}{v^4}\Big)\right)\text{Li}_2(e^{-vN}).
\eeaa
Since by our assumption $N \cdot v = c_1$, $v=A(c_1,c_2)\cdot n^{-1/2}$ and $w_0/v = o(1)$, it follows that 
\beaa
J_3 &=& i\frac{wc_1}{v^2}\log(1-e^{-c_1}) - \frac{w^2}{2v^3} \frac{c_1^2}{e^{c_1}-1} + \frac{w^2c_1}{v^3} \log(1-e^{-c_1})-i\frac{w}{v^2} \text{Li}_2(e^{-c_1})\\
&&\quad -\frac{w^2}{v^3}\text{Li}_2(e^{-c_1}) + O\left(\frac{w_0^3}{v^4}\right)\\
&=& - iwn \cdot \frac{-c_1 \log(1-e^{-c_1}) +\text{Li}_2(e^{-c_1})}{A(c_1,c_2)^2} \\
&&\quad-w^2\cdot \frac{n^{3/2}}{A(c_1,c_2)^3}\left(-c_1\log(1-e^{-c_1}) +\text{Li}_2(e^{-c_1}) + \frac{1}{2}\frac{c_1^2}{e^{c_1}-1} \right)+ O\left(\frac{w_0^3}{v^4}\right).
\eeaa
We estimate the other two terms $J_2$ and $J_4$ in  the same way.
Therefore
\beaa
&&J_1 + J_2- J_3 -J_4 \nonumber\\
&=& -iwn\cdot\frac{1}{A(c_1,c_2)^2}\Big(\frac{\pi^2}{6} +  \text{Li}_2(e^{-c_1-c_2})-\text{Li}_2(e^{-c_1})-\text{Li}_2(e^{-c_2})\nonumber\\
&&\quad -(c_1+c_2)\log(1-e^{-c_1-c_2}) + c_1 \log(1-e^{-c_1}) + c_2\log(1-e^{-c_2})\Big)\nonumber\\
&&\quad -w^2\cdot \frac{n^{3/2}}{A(c_1,c_2)^3}\Big(\frac{\pi^2}{6} +  \text{Li}_2(e^{-c_1-c_2})-\text{Li}_2(e^{-c_1})-\text{Li}_2(e^{-c_2})\nonumber\\
&&\quad -(c_1+c_2)\log(1-e^{-c_1-c_2}) + c_1 \log(1-e^{-c_1}) + c_2\log(1-e^{-c_2})\nonumber\\
&&\quad + \frac{1}{2}\frac{(c_1+c_2)^2}{e^{c_1+c_2}-1} - \frac{1}{2}\frac{c_1^2}{e^{c_1}-1} - \frac{1}{2}\frac{c_2^2}{e^{c_2}-1}\Big) + O\left(\frac{w_0^3}{v^4}\right).
\eeaa
By recalling \eqref{eq:A2}, we eventually get
\bea\lbl{eq:term1}
&&J_1 + J_2- J_3 -J_4 \nonumber\\
&=& - iwn - w^2\cdot \frac{n^{3/2}}{A(c_1,c_2)^3}\left(A(c_1,c_2)^2 + \frac{1}{2}\frac{(c_1+c_2)^2}{e^{c_1+c_2}-1} - \frac{1}{2}\frac{c_1^2}{e^{c_1}-1} -\frac{1}{2} \frac{c_2^2}{e^{c_2}-1}\right) \nonumber\\
&&\quad+ O\left(\frac{w_0^3}{v^4}\right).
\eea

 Now let us estimate $H$ in \eqref{eq:JHK}. 
 Since for $n$ sufficiently large, $|w/v| <1/2$, we have that
\beaa
\left|\log\Big(\frac{1}{z}\Big)-\log\Big(\frac{1}{v}\Big)\right| = \left|\log \Big(1+ i \frac{w}{v}\Big)\right| \le \sum_{k=1}^{\infty} \frac{1}{k}\left|\frac{w}{v}\right|^k \le \frac{|w/v|}{1-|w/v|} =O\left(\frac{w_0}{v}\right),
\eeaa
and thus
\bea\lbl{eq:term2}
H=O\left(\frac{w_0}{v}\right).
\eea

Now let us estimate $K_1,K_2$ and $K_3$ in \eqref{eq:JHK}. These three quantities are in the same form, so we just show the details for $K_2$.
Since
\beaa
&&\frac{\partial}{\partial u} \log(1-e^{-u}) =\frac{e^{-u}}{1-e^{-u}} = \frac{1}{e^u-1};\\
&& \frac{\partial^2}{\partial u^2} \log(1-e^{-u}) = -\frac{e^u}{(e^u-1)^2},
\eeaa
we obtain
\beaa
2K_2=\log\frac{1-e^{-zN}}{1-e^{-vN}} &=& \log(1-e^{-vN-iwN}) - \log(1-e^{-vN}) \\
&=&\frac{iwN}{e^{vN}-1} + \frac{1}{2} \frac{e^{vN}}{(e^{vN}-1)^2} w^2 N^2 + O\left(\frac{w_0^3}{v^3}\right).
\eeaa
Since $N \cdot v = c_1$, $v=A(c_1,c_2)\cdot n^{-1/2}$ and $w_0/v = o(1)$,
\beaa
K_2=O\left(\frac{w_0}{v}\right).
\eeaa
Similarly, $K_i= O(\frac{w_0}{v})$ for $i=1,3$ and thus,
\bea\lbl{eq:term3}
K_1-K_2 -K_3=O\left(\frac{w_0}{v}\right).
\eea

Plugging \eqref{eq:term1}, \eqref{eq:term2} and \eqref{eq:term3} into \eqref{eq:complex}, we arrive at
\bea
&&\log f(v+iw)-\log f(v) \nonumber\\
&=& - iwn - w^2\cdot \frac{n^{3/2}}{A(c_1,c_2)^3}\left(A(c_1,c_2)^2 + \frac{1}{2}\frac{(c_1+c_2)^2}{e^{c_1+c_2}-1} - \frac{1}{2}\frac{c_1^2}{e^{c_1}-1} -\frac{1}{2} \frac{c_2^2}{e^{c_2}-1}\right) \nonumber\\
&&\quad+ O\left(\frac{w_0^3}{v^4}\right) + O(v) + O\left(\frac{w_0}{v}\right)\nonumber\\
&=&- iwn - \frac{1}{2}w^2 n^{3/2}S(c_1,c_2) + O(n^{-1/4+\epsilon}),\label{eq:firsttermaux}
\eea
where for convenience we denoted
\beaa
S(c_1,c_2)=\frac{1}{A(c_1,c_2)^3}\left(2A(c_1,c_2)^2 + \frac{(c_1+c_2)^2}{e^{c_1+c_2}-1} - \frac{c_1^2}{e^{c_1}-1} - \frac{c_2^2}{e^{c_2}-1}\right).
\eeaa
In obtaining the term of $O(n^{-1/4+\epsilon})$ in \eqref{eq:firsttermaux}, we used that $A(c_1,c_2)$ is bounded from above and below by two universal positive constants (see \eqref{eq:Aboundabove} and \eqref{eq:lowerboundcA}). 

We now show that $S(c_1,c_2)$ is bounded from above and from below by two universal positive constants. Recall that 
\beaa
A(c_1,c_2)^2=\int_{0}^{c_1} \frac{x}{e^x-1}\,dx -\int_{c_2}^{c_1+c_2} \frac{x}{e^x-1}\,dx.
\eeaa
By integration by parts (noticing that we assumed $\alpha\le\beta$, so that $c_1\le c_2$), we have
\begin{equation}\label{eq:positive}
\begin{split}
L(c_1,c_2)&:=2A(c_1,c_2)^2 + \frac{(c_1+c_2)^2}{e^{c_1+c_2}-1} - \frac{c_1^2}{e^{c_1}-1} - \frac{c_2^2}{e^{c_2}-1} \\
&=\int_{0}^{c_1} \frac{x^2 e^x}{(e^x-1)^2}\,dx -\int_{c_2}^{c_1+c_2} \frac{x^2 e^x}{(e^x-1)^2}\,dx\\
&\ge \int_{0}^{c_1} \frac{x^2 e^x}{(e^x-1)^2}\,dx -\int_{c_1}^{2c_1} \frac{x^2 e^x}{(e^x-1)^2}\,dx=:\widetilde B(c_1).
\end{split}
\end{equation}
as the function ${x^2 e^x}/{(e^x-1)^2}$ is strictly decreasing for $x\ge0$ (which can be elementarily verified by showing that its derivative is negative). 
Since
\[
\frac{d}{dc }\widetilde B(c)=\frac{2c^2e^c}{(e^c+1)^2}>0,
\]
$\widetilde B(c)$ is increasing in $c$. Thus, continuing from \eqref{eq:positive} and using $\min(c_1,c_2)\ge \bar c$ in \eqref{eq:lowerboundcA}, we obtain
\begin{equation}\label{eq:Lboundedbelow}
0<\widetilde B(\bar c)\le \widetilde B(c_1)\le L(c_1,c_2)\le \int_{0}^{\infty} \frac{x^2 e^x}{(e^x-1)^2}\,dx<\infty.
\end{equation}
Using \eqref{eq:Aboundabove} and \eqref{eq:lowerboundcA}, we conclude that $S(c_1,c_2)=L(c_1,c_2)A(c_1,c_2)^{-3}$ is bounded from above and from below by two universal positive constants.

Using \eqref{eq:firsttermaux}, we can estimate the  term
\begin{align}
I_1 &= \frac{1}{2\pi} \int_{-w_0}^{w_0} e^{\log f(v+iw)} e^{n(v+iw)}\,dw\nonumber\\
&= \frac{1}{2\pi} \int_{-w_0}^{w_0} e^{\log f(v)-iwn-\frac{1}{2}w^2n^{3/2}S(c_1,c_2) + O(n^{-1/4+\epsilon})} e^{n(v+iw)}\,dw\nonumber\\
&=\exp(\log f(v) + nv +O(n^{-1/4+\epsilon})) \cdot \frac{1}{2\pi}\int_{-w_0}^{w_0} e^{-\frac{1}{2}w^2 n^{3/2}S(c_1,c_2)}\,dw.\label{eq:I1estimate0}
\end{align}
For the first term, since we have from \eqref{eq:real} that
\beaa
\log f(v) &=& \frac{n^{1/2}}{A(c_1,c_2)} \left(\frac{\pi^2}{6} + \text{Li}_2(e^{-c_1-c_2}) - \text{Li}_2(e^{-c_1})-\text{Li}_2(e^{-c_2})\right)\nonumber\\
 && -\frac{1}{2}\log\left(\frac{\sqrt{n}}{A(c_1,c_2)}\right)-\frac{1}{2}\log \frac{(1-e^{-c_1})(1-e^{-c_2})}{1-e^{-c_1-c_2}}-\log\sqrt{2\pi} + O(v),
\eeaa
it follows that
\begin{align}
& \exp(\log f(v) + nv +O(n^{-1/4+\epsilon})) \nonumber\\
&=\frac{1}{\sqrt{2\pi}} \frac{\sqrt{A(c_1,c_2)}}{n^{1/4}} \sqrt{\frac{1-e^{-(c_1+c_2)}}{(1-e^{-c_1})(1-e^{-c_2})}}\cdot e^{\sqrt{n}K(c_1,c_2)} \Big(1+ O(n^{-1/4+\epsilon})\Big),\label{eq:I1estimate1}
\end{align}
where
\beaa
K(c_1,c_2) = A(c_1,c_2) + \frac{1}{A(c_1,c_2)}\left(\frac{\pi^2}{6} + \text{Li}_2(e^{-(c_1+c_2)}) - \text{Li}_2(e^{-c_1})-\text{Li}_2(e^{-c_2})\right).
\eeaa
For the other term, by setting $t=\sqrt{S(c_1,c_2)}n^{3/4}\cdot w$, we get
\bea
&&\int_{-w_0}^{w_0} e^{-\frac{1}{2}w^2 n^{3/2}S(c_1,c_2)}\,dw=\frac{1}{\sqrt{S(c_1,c_2)}n^{3/4}}\int_{-\sqrt{S(c_1,c_2)}n^{\epsilon/3}}^{\sqrt{S(c_1,c_2)}n^{\epsilon/3}} e^{-t^2/2}\,dt\nonumber\\
&&=\frac{1}{\sqrt{S(c_1,c_2)}n^{3/4}}\left(\int_{-\infty}^{\infty} e^{-t^2/2}\,dt-2\int_{\sqrt{S(c_1,c_2)}n^{\epsilon/3}}^{\infty} e^{-t^2/2}\,dt \right)\nonumber\\
&&=\frac{1}{\sqrt{S(c_1,c_2)}n^{3/4}}\left(\sqrt{2\pi} + O\Big(\frac{2}{\sqrt{S(c_1,c_2)}n^{\epsilon/3}} e^{-\frac{S(c_1,c_2)}{2} n^{2\epsilon/3}}  \Big)\right)\nonumber\\
&&=\frac{\sqrt{2\pi}}{\sqrt{S(c_1,c_2)}n^{3/4}}\left(1 + O(n^{-1/4+\epsilon})\right),\label{eq:I1estimate2}
\eea
where in the last equality we used the fact that $S(c_1,c_2)$ bounded from above and from below by two universal positive constants.

Finally, putting \eqref{eq:I1estimate0}, \eqref{eq:I1estimate1} and \eqref{eq:I1estimate2} together, we have
\bea\lbl{eq:mainterm}
I_1 &=& \frac{1}{2\pi} \frac{\sqrt{A(c_1,c_2)}}{\sqrt{S(c_1,c_2)}} \sqrt{\frac{1-e^{-(c_1+c_2)}}{(1-e^{-c_1})(1-e^{-c_2})}}\cdot \frac{e^{\sqrt{n}K(c_1,c_2)}}{n} (1+ O(n^{-1/4+\epsilon})) \nonumber\\
&=&\frac{A(c_1,c_2)^2 }{ 2\pi\sqrt{L(c_1,c_2)}}  \sqrt{\frac{1-e^{-(c_1+c_2)}}{(1-e^{-c_1})(1-e^{-c_2})}}\cdot\frac{e^{\sqrt{n}K(c_1,c_2)}}{n}(1+ O(n^{-1/4+\epsilon})),
\eea
where
\beaa
&&K(c_1,c_2) = A(c_1,c_2) + \frac{1}{A(c_1,c_2)}\left(\frac{\pi^2}{6} + \text{Li}_2(e^{-(c_1+c_2)}) - \text{Li}_2(e^{-c_1})-\text{Li}_2(e^{-c_2})\right);\\
&&L(c_1,c_2) =2A(c_1,c_2)^2 + \frac{(c_1+c_2)^2}{e^{c_1+c_2}-1} -  \frac{c_1^2}{e^{c_1}-1}-\frac{c_2^2}{e^{c_2}-1}.
\eeaa

We know from \eqref{eq:Aboundabove}, \eqref{eq:lowerboundcA} and \eqref{eq:Lboundedbelow} that $A(c_1,c_2)$ and $L(c_1,c_2)$ are bounded from above and from below by two universal positive constants.
 Since $\min(c_1,c_2)\ge\bar c>0$, we have
\begin{equation}\label{eq:coefficientexponential}
\sqrt{1-e^{-2\bar c}}\le \sqrt{\frac{1-e^{-(c_1+c_2)}}{(1-e^{-c_1})(1-e^{-c_2})}}\le \frac{1}{1-e^{-\bar c}}.
\end{equation}
Using \eqref{eq:A3} and $\text{Li}_2(1)=\frac{\pi^2}{6}$, we have 
\begin{equation}\label{eq:Kbound}
2A(c_1,c_2)\le K(c_1,c_2)\le A(c_1,c_2)+ \frac{\pi^2}{3A(c_1,c_2)}.
\end{equation}
This implies that $K(c_1,c_2)$ is also bounded from above and from below by two universal positive constants.

Hence, all of $A(c_1,c_2)$, $L(_1,c_2)$, $K(c_2,c_2)$ and $\sqrt{\frac{1-e^{-(c_1+c_2)}}{(1-e^{-c_1})(1-e^{-c_2})}}$ in \eqref{eq:mainterm} are bounded from above and from below by two universal positive constants.  To prove that $I_2$ and $I_3$ in \eqref{eq:I1I2I3} are lower order terms compared to $I_1$, it suffices to show that
\[
|I_2|+|I_3|=o\left(\frac{\exp\{\sqrt{n}K(c_1,c_2)\}}{n^{\frac{5}{4}-\varepsilon}}\right).
\]
We will prove the above estimate in the next subsection.

\subsection{Estimate the remainder terms $I_2$ and $I_3$.} \label{subsec:error}
It is enough to estimate $I_2$ in \eqref{eq:I1I2I3}. The proof for $I_3$ is identical. Let $w_1=C\cdot v$ where $C$ is a universally large constant to be fixed in {\it Step 2} below (one will see later that $C=20\pi$ would suffice). We further split $I_2$ into two parts.
\begin{align}
I_2&=\frac{1}{2\pi}\int_{w_0}^{\pi} f(v+iw) e^{n(v+iw)}\,dw \nonumber\\
&= \frac{1}{2\pi}\int_{w_0}^{w_1} f(v+iw) e^{n(v+iw)}\,dw+\frac{1}{2\pi}\int_{w_1}^{\pi} f(v+iw) e^{n(v+iw)}\,dw\nonumber\\
&=:I_2' + I_2''.\label{eq:splitI2}
\end{align}

{\it Step 1. Estimate the term $I_2'$.} Note that for $z=v+iw$ with $w_0\le w\le w_1$, the hypothesis of Lemma \ref{lem:asymp} is satisfied. Thus
\beaa
 \log f(z) &=& \frac{1}{z} \left(\frac{\pi^2}{6} + \text{Li}_2(e^{-z(N+M)}) - \text{Li}_2(e^{-zN})-\text{Li}_2(e^{-zM})\right)\\
 && -\frac{1}{2}\log\left(\frac{1}{z}\right)-\frac{1}{2}\log \frac{(1-e^{-zN})(1-e^{-zM})}{1-e^{-z(N+M)}} + O(1)
\eeaa
and
\beaa
|I_2'|&=& \left|\frac{1}{2\pi}\int_{w_0}^{w_1} e^{\log f(v+iw) +n(v+iw)}\,dw\right|\\
&=&\Big| e^{O(1)}\int_{w_0}^{w_1} \sqrt{z} \cdot \sqrt{\frac{1-e^{-z(N+M)}}{(1-e^{-zN})(1-e^{-zM})}} \\
&&\quad \cdot\exp\left\{\frac{1}{z} \Big(\frac{\pi^2}{6} + \text{Li}_2(e^{-z(N+M)}) - \text{Li}_2(e^{-zN})-\text{Li}_2(e^{-zM})\Big) + nv +inw\right\} \,dw\Big|\\
&\lesssim& \int_{w_0}^{w_1} \sqrt{|z|} \cdot \sqrt{\frac{|1-e^{-z(N+M)}|}{|1-e^{-zN}|\cdot|1-e^{-zM}|}} \\
&&\quad \cdot \exp\left\{ \text{Re}\frac{1}{z} \Big(\frac{\pi^2}{6} + \text{Li}_2(e^{-z(N+M)}) - \text{Li}_2(e^{-zN})-\text{Li}_2(e^{-zM}) \Big)+ nv \right\} \,dw.
\eeaa
Since $|z| =O(v)$, and 
\beaa
2\ge |1-e^{-zN}|=|1-e^{-vN-iwN}| \ge 1- |e^{-vN-iwN}| = 1- e^{-vN} = 1-e^{-c_1},
\eeaa
we conclude that (recalling $v=A(c_1,c_2) n^{-1/2}$),
\begin{align}
|I_2'| &\lesssim\sqrt{v}\int_{w_0}^{w_1} \exp\{ \text{Re}\frac{1}{z} \big(\frac{\pi^2}{6} + \text{Li}_2(e^{-z(N+M)}) - \text{Li}_2(e^{-zN})-\text{Li}_2(e^{-zM})\big) +nv\} \,dw\nonumber\\
&=\sqrt{v}\cdot e^{\sqrt{n}K(c_1,c_2)} \int_{w_0}^{w_1}\exp\Big\{ \text{Re}\frac{1}{z} \Big(\frac{\pi^2}{6} + \text{Li}_2(e^{-z(N+M)}) - \text{Li}_2(e^{-zN})-\text{Li}_2(e^{-zM})\Big) \nonumber\\
&~\quad \quad\quad\quad\quad\quad \quad\quad -\frac{1}{v} \Big(\frac{\pi^2}{6} + \text{Li}_2(e^{-v(N+M)}) - \text{Li}_2(e^{-vN})-\text{Li}_2(e^{-vM}) \Big)\Big\} \,dw.\label{eq:I2'}
\end{align}
Let us denote
\bea\lbl{eq:difference}
&&\mathcal{D}:=\frac{1}{z} \left(\frac{\pi^2}{6} + \text{Li}_2(e^{-z(N+M)}) - \text{Li}_2(e^{-zN})-\text{Li}_2(e^{-zM})\right) \nonumber\\
&&\quad\quad\quad-\frac{1}{v} \left(\frac{\pi^2}{6} + \text{Li}_2(e^{-v(N+M)}) - \text{Li}_2(e^{-vN})-\text{Li}_2(e^{-vM}) \right).
\eea
Next we are going to show that 
\begin{align}\label{eq:estimateII}
\text{Re}(\mathcal{D}) \le -\frac{w^2}{v^3} \cdot T 
\end{align}
for some universal constant $T >0$, where $\text{Re}(\mathcal{D})$ is the real part of $\mathcal{D}$.

We first get
\beaa
\text{Re}\left(\frac{\text{Li}_2(e^{-c_1-iwN})}{z} \right)&=&\text{Re}\Big(\frac{v-iw}{v^2 +w^2}\text{Li}_2(e^{-c_1-iwN}) \Big)\\
&=&\frac{1}{v^2+w^2} \Big\{ v\text{Re}(\text{Li}_2(e^{-c_1-iwN})) + w \text{Im}(\text{Li}_2(e^{-c_1-iwN}))\Big\}\\
&=&\frac{v}{v^2+w^2} \Big\{ \text{Re}(\text{Li}_2(e^{-c_1-ic_1\frac{w}{v}})) + \frac{w}{v}\cdot \text{Im}(\text{Li}_2(e^{-c_1-ic_1\frac{w}{v}}))\Big\}\\
&=&\frac{1}{v}\cdot \frac{1}{1+(w/v)^2} \Big\{ \text{Re}(\text{Li}_2(e^{-c_1-ic_1\frac{w}{v}})) + \frac{w}{v}\cdot \text{Im}(\text{Li}_2(e^{-c_1-ic_1\frac{w}{v}}))\Big\}.
\eeaa
Therefore,
\beaa
v\cdot \text{Re}(\mathcal{D}) &=& \frac{1}{1+(\frac{w}{v})^2}\left(\frac{\pi^2}{6} +\eta_{c_1+c_2}\Big(\frac{w}{v}\Big) - \eta_{c_1}\Big(\frac{w}{v}\Big) -\eta_{c_2}\Big(\frac{w}{v}\Big) \right)\\
&&\quad -\left(\frac{\pi^2}{6} +\eta_{c_1+c_2}(0) - \eta_{c_1}(0) -\eta_{c_2}(0) \right)\\
&=&H\left(\frac{w}{v}\right)-H(0),
\eeaa
where $\eta_c$ is defined in \eqref{eq:definitionf} and $H$ is defined in \eqref{eq:definitionG}. By Lemma \ref{lem:technical}, we have for $0<w/v\le C$ that
\[
v\cdot \text{Re}(\mathcal{D}) =H\left(\frac{w}{v}\right)-H(0)\le -\frac{\delta}{1+C^2} \frac{w^2}{v^2}=: - T\frac{w^2}{v^2}.
\]
This proves \eqref{eq:estimateII}. 



Therefore, continuing from \eqref{eq:I2'} with the help of \eqref{eq:estimateII}, we have that
\begin{align}
|I_2'|&\lesssim\sqrt{v}\cdot e^{\sqrt{n}K(c_1,c_2)} \int_{w_0}^{w_1}\exp\{-w^2T/v^3\}dw\nonumber\\
&\lesssim \sqrt{v}\cdot e^{\sqrt{n}K(c_1,c_2)} \frac{1}{w_0}\int_{w_0}^{w_1}w\exp\{-w^2T/v^3\}dw\nonumber\\
&\lesssim \frac{A(c_1,c_2)^{\frac 72}}{2Tn^{1+\frac{\epsilon}{3}}}\exp\left\{\sqrt{n}K(c_1,c_2)-\frac{Tn^{\frac{2\varepsilon}{3}}}{A(c_1,c_2)^3}\right\}\nonumber\\
&=o\left(\frac{\exp\{\sqrt{n}K(c_1,c_2)\}}{n^{\frac{5}{4}-\varepsilon}}\right).\label{eq:I2final1}
\end{align}
This finishes the estimate of $I_2'$, which is indeed a lower order term compared to $I_1$. 

{\it Step 2. Estimate the term $I_2''$.} 
Recall that
\beaa
 \log f(z)=\sum_{k=1}^{\infty} \frac{1}{k} \frac{(1-e^{-kz\cdot N})(1-e^{-kz\cdot M})}{e^{kz}-1}
\eeaa
and
\begin{equation}\label{eq:definitionofI2b}
I_2''=\frac{1}{2\pi}\int_{w_1}^{\pi} e^{\log f(v+iw) +n(v+iw)}\,dw,
\end{equation}
where $w_1=C\cdot v$ with $C$ being  a large constant (to be determined below). 

First, note that $|e^{k(v+iw)}-1|  \ge e^{kv}-1 \ge kv$. Secondly, we have
\beaa
|e^{z}-1| &=& |e^{v + iw}-1| = \sqrt{(e^{v}\cos w-1)^2 + e^{2v}\sin^2w}\\
&=&\sqrt{(e^{v}-1)^2 + 4 e^{v}\sin^2(w/2)},
\eeaa
and thus, 
\beaa
|e^{v+iw}-1| \ge 2\sin(w/2) \ge 2w/\pi
\eeaa 
which follows from the inequality that $\sin\theta \ge \frac{2}{\pi}\theta$ for $0\le \theta \le \pi/2$.

It follows that
\beaa
& &|\log f(v+iw)| \\
&=& \Big|\sum_{k=1}^{\infty} \frac{1}{k} \frac{(1-e^{-k(v+iw)N})(1-e^{-k(v+iw)M})}{e^{k(v+iw)-1}}\Big|\\
&\le & \frac{2\pi }{Cv}+\frac{1}{v}\sum_{k=2}^{\infty}\frac{(1+e^{-kc_1})(1+e^{-kc_2})}{k^2}\\
&\le & \frac{2\pi }{Cv}+ \frac{1}{v}\sum_{k=1}^{\infty}\frac{(1+e^{-kc_1})(1+e^{-kc_2})}{k^2} -\frac{1}{v}(1+e^{-c_1})(1+e^{-c_2})\\
&=&  \frac{1}{v}\left(\frac{2\pi }{C}+\frac{\pi^2}{6} + \text{Li}_2(e^{-(c_1+c_2)}) + \text{Li}_2(e^{-c_1})+\text{Li}_2(e^{-c_2}) -(1+e^{-c_1})(1+e^{-c_2})\right).
\eeaa
We claim that for $C$ sufficiently large, we have
\begin{align}\label{eq:conditionC}
&\frac{4\pi }{C}+\frac{\pi^2}{6} + \text{Li}_2(e^{-(c_1+c_2)}) + \text{Li}_2(e^{-c_1})+\text{Li}_2(e^{-c_2}) -(1+e^{-c_1})(1+e^{-c_2}) \nonumber\\
& \quad \le \frac{\pi^2}{6} + \text{Li}_2(e^{-(c_1+c_2)}) - \text{Li}_2(e^{-c_1})-\text{Li}_2(e^{-c_2}).
\end{align}
Suppose the above inequality \eqref{eq:conditionC} holds. Then continuing from \eqref{eq:definitionofI2b}, we have
\begin{align}
|I_2''|&\le \frac{1}{2\pi}\int_{w_1}^{\pi} e^{|\log f(v+iw)| +nv}\,dw\nonumber\\
&\le \frac{1}{2\pi}\int_{w_1}^{\pi} e^{\sqrt{n}(K(c_1,c_2)-2\pi/C)}\,dw=o\left(\frac{\exp\{\sqrt{n}K(c_1,c_2)\}}{n^{\frac{5}{4}-\varepsilon}}\right).\label{eq:I2final2}
\end{align}
Now we are left to prove \eqref{eq:conditionC}, which is equivalent to
\[
\frac{2\pi}{C}+ \text{Li}_2(e^{-c_1})+\text{Li}_2(e^{-c_2}) \le \frac{(1+e^{-c_1})(1+e^{-c_2})}{2}.
\]
Since we assume that $\min(\alpha,\beta)\ge 4$, we know from Lemma \ref{lem:changeofvariables} that $\min(c_1,c_2)\ge 13/5>2$. Hence,   $$\text{Li}_2(e^{-c_1})\le \frac{e^{-c_1}}{1-e^{-c_1}}=\frac{1}{e^{c_1}-1}\le \frac{1}{e^{2}-1}<\frac 15,$$
and similarly $\text{Li}_2(e^{-c_2})\le 1/5$. Hence,  we only need to choose $C$ large so that
\[
\frac{2\pi}{C}+\frac{2}{5}\le \frac 12.
\]
Therefore, $C= 20\pi$ would suffice.

From \eqref{eq:splitI2}, \eqref{eq:I2final1} and \eqref{eq:I2final2}, we obtained that
\begin{equation}\label{eq:I2final3}
|I_2|\le o\left(\frac{\exp\{\sqrt{n}K(c_1,c_2)\}}{n^{\frac{5}{4}-\varepsilon}}\right).
\end{equation}
Since $I_3$ is in the same form as $I_2$ defined in \eqref{eq:I1I2I3}, one can show similarly that 
\begin{equation}\label{eq:I3final3}
|I_3|\le o\left(\frac{\exp\{\sqrt{n}K(c_1,c_2)\}}{n^{\frac{5}{4}-\varepsilon}}\right).
\end{equation}

\subsection{Asymptotic formula}
Combining \eqref{eq:I1I2I3}, \eqref{eq:mainterm}, \eqref{eq:I2final3} and \eqref{eq:I3final3}, we obtain
\begin{align}\label{eq:another}
p_n(N,M)=\frac{A(c_1,c_2)^2 }{ 2\pi\sqrt{L(c_1,c_2)}}  \sqrt{\frac{1-e^{-(c_1+c_2)}}{(1-e^{-c_1})(1-e^{-c_2})}}\cdot\frac{e^{\sqrt{n}K(c_1,c_2)}}{n}\left(1+ O(n^{-1/4+\epsilon})\right),
\end{align}
where
\beaa
&&K(c_1,c_2) = A(c_1,c_2) + \frac{1}{A(c_1,c_2)}\left(\frac{\pi^2}{6} + \text{Li}_2(e^{-(c_1+c_2)}) - \text{Li}_2(e^{-c_1})-\text{Li}_2(e^{-c_2})\right);\\
&&L(c_1,c_2) =2A(c_1,c_2)^2 + \frac{(c_1+c_2)^2}{e^{c_1+c_2}-1} -  \frac{c_1^2}{e^{c_1}-1}-\frac{c_2^2}{e^{c_2}-1}.
\eeaa
By recalling \eqref{eq:A} and \eqref{eq:positive}, we have the explicit integral representation for $A(c_1,c_2)$ and $L(c_1,c_2)$:
\begin{align*}
A(c_1,c_2)&=\sqrt{\int_{0}^{c_1} \frac{x}{e^x-1}\,dx -\int_{c_2}^{c_1+c_2} \frac{x}{e^x-1}\,dx},\\
L(c_1,c_2)&=\int_{0}^{c_1} \frac{x^2 e^x}{(e^x-1)^2} \,dx -\int_{c_2}^{c_1+c_2} \frac{x^2 e^x}{(e^x-1)^2} \,dx.
\end{align*}
Therefore, all the terms in  the equation  \eqref{eq:another} are explicit.

The last step is to express the asymptotic formula in terms of $\alpha, \beta$ instead of $c_1, c_2$. Recall that $c_1 = \alpha A(c_1, c_2)$ and $c_2 = \beta A(c_1, c_2)$. We denote $g(\alpha,\beta) = A(c_1, c_2)>0$ and $g(\alpha,\beta)$ satisfies
\beaa
g^2(\alpha,\beta) = \int_{0}^{\alpha g(\alpha,\beta)} \frac{x}{e^x-1} \,dx - \int_{\beta g(\alpha,\beta)}^{(\alpha+\beta)g(\alpha,\beta)} \frac{x}{e^x-1} \,dx
\eeaa
or equivalently,
\beaa
g^2(\alpha,\beta) = \text{Li}_2(1-e^{-\alpha g(\alpha,\beta)}) + \text{Li}_2(1-e^{-\beta g(\alpha,\beta)}) - \text{Li}_2 (1-e^{-(\alpha+\beta)g(\alpha,\beta)}).
\eeaa
The uniqueness and existence of $g(\alpha,\beta)>0$ is guaranteed by Lemma \ref{lem:changeofvariables}. Then by recalling \eqref{eq:another}, we have
\begin{align}\label{eq:pNMfinal}
&p_n(N,M)\nonumber\\
&= \frac{g(\alpha,\beta)^2 }{ 2\pi\sqrt{L(\alpha,\beta)}}  \sqrt{\frac{1-e^{-(\alpha+\beta)g(\alpha,\beta)}}{(1-e^{-\alpha g(\alpha,\beta)})(1-e^{-\beta g(\alpha,\beta)})}}\cdot\frac{e^{\sqrt{n}K(\alpha,\beta)}}{n}(1+ O(n^{-1/4+\epsilon})),
\end{align}
where
\begin{align*}
K(\alpha,\beta)= g(\alpha,\beta) + \frac{1}{g(\alpha,\beta)}\left(\frac{\pi^2}{6} + \text{Li}_2(e^{-(\alpha+\beta)g(\alpha,\beta)}) - \text{Li}_2(e^{-\alpha g(\alpha,\beta)}) -\text{Li}_2 (e^{-\beta g(\alpha,\beta)})\right)
\end{align*}
and
\beaa
L(\alpha,\beta) = 2 g^2(\alpha,\beta) + \frac{(\alpha+\beta)^2 g^2(\alpha,\beta)}{e^{(\alpha+\beta)g(\alpha,\beta)}-1} -\frac{\alpha^2 g^2(\alpha,\beta)}{e^{\alpha g(\alpha,\beta)}-1} -\frac{\beta^2 g^2(\alpha,\beta)}{e^{\alpha g(\alpha,\beta)}-1}.
\eeaa

Since $\min(\alpha,\beta)\ge 4$, it follows from Lemma \ref{lem:changeofvariables} that $\min(c_1,c_2)\ge 13/5$. By \eqref{eq:Aboundabove}, \eqref{eq:lowerboundcA}, \eqref{eq:Lboundedbelow}, \eqref{eq:coefficientexponential} and \eqref{eq:Kbound}, we know all of $A(c_1,c_2)$, $L(_1,c_2)$, $K(c_2,c_2)$ and $\sqrt{\frac{1-e^{-(c_1+c_2)}}{(1-e^{-c_1})(1-e^{-c_2})}}$ in \eqref{eq:another} are bounded from above and from below by two universal positive constants. Hence, all the functions $g(\alpha,\beta)$, $L(\alpha,\beta)$, $K(\alpha,\beta)$ and  $ \sqrt{\frac{1-e^{-(\alpha+\beta)g(\alpha,\beta)}}{(1-e^{-\alpha g(\alpha,\beta)})(1-e^{-\beta g(\alpha,\beta)})}}$ in \eqref{eq:pNMfinal} are bounded from above and from below by two universal positive constants. The error term $O(n^{-1/4+\epsilon})$ is also uniform in $\alpha,\beta\in[4,\infty]$. 
\end{proof}



\end{document}